\definecolor{light-gray}{gray}{0.95}
\newtheorem{thm}{Theorem}
\newtheorem{lm}{Lemma}
\theoremstyle{definition}
\newtheorem{crl}{Corollary}
\newtheorem{dfn}{Definition}
\theoremstyle{remark}
\newcommand{\des}{\mathop{\rm des}\nolimits}
\newcommand{\stir}[2]{S(#1, #2)}
\newcommand{\ctir}[2]{s(#1, #2)}
\newcommand{\sstir}[2]{\left \{ {#1} \atop {#2} \right \}}
\newcommand{\cstir}[2]{\left[ {#1} \atop {#2} \right]}
\newcommand{\GS}{\mathcal{SP}}
\newcommand{\stirr}[2]{S_{\bf 3}(#1, #2)}
\newcommand{\ctirr}[2]{s_{\bf 3}(#1, #2)}
\newcommand{\stirrr}[2]{{S}_k(#1, #2)}
\newcommand{\ctirrr}[2]{{s}_k(#1, #2)}
\newcommand{\stirg}[2]{S_{\mathbf{k}}(#1, #2)}
\newcommand{\ctirg}[2]{s_{\mathbf{k}}(#1, #2)}
\newcommand{\B}{b}
\newcommand{\G}{g}
\title{Stirling permutations on multisets}
\author{Askar Dzhumadil'daev, Damir Yeliussizov}
\date{}
\keywords{Stirling permutations, Stirling numbers, Eulerian numbers, $P$-partitions, partitions of sets, permutation statistics}
\begin{document}

\maketitle

\begin{center}
Kazakh-British Technical University \\
59 Tole bi St, 050000, Almaty, Kazakhstan\\
\texttt{dzhuma@hotmail.com, yeldamir@gmail.com}
\end{center}

\begin{abstract}  A permutation $\sigma$ of a multiset  is called Stirling permutation if $\sigma(s)\ge \sigma(i)$ as soon as  $\sigma(i)=\sigma(j)$ and $i<s<j.$ 
In our paper we study Stirling polynomials that arise in the generating function for descent statistics on Stirling permutations of any multiset.  We develop generalizations of the classical Stirling numbers and present their combinatorial interpretations. Particularly, we apply the theory of $P$-partitions. Using certain specifications we also introduce the Stirling numbers of odd type and generalizations of the central factorial numbers.  
\end{abstract}

\section{Introduction}

Let ${\bf k} = (k_1, \ldots, k_n)$ and 
${\bf n}=\{1^{k_1}, \ldots, n^{k_n} \}$  
be a multiset of type ${\bf k},$ i.e., $k_i$ is a number of copies of the element $i.$ A permutation of a multiset is a sequence of  its elements. We say that the permutation
$\sigma$ of a multiset is a {\it Stirling permutation} if $\sigma(s) \ge \sigma(i)$ as soon as $\sigma(i)=\sigma(j)$ and $i<s<j.$ Stirling permutations were introduced by Gessel and Stanley \cite{stirpol} in case of the multiset $\{1^2, \ldots, n^2 \}$. 

Denote by $\GS_{\bf k}$ the set of Stirling permutations of ${\bf n}.$  
For $\sigma\in \GS_{\bf k}$ say that $i$ is {\it a descent} index if $\sigma(i)>\sigma(i+1)$ and $i<K$ or $i=K,$ where $K=k_1+\cdots+k_n.$  Let
$$A_{{\bf k},i}=|\{\sigma\in \GS_{\bf k}: |\mathrm{des}(\sigma)|=i\}|$$
be the number of Stirling permutations that have $i$ descents (here $\rm des(\sigma)$ is a set of descent indices of $\sigma$). The number $A_{{\bf k},i}$ is called {\it Eulerian number} 
and the polynomial $\sum_{i=1}^{n} A_{{\bf k},i} x^i$ is {\it Eulerian polynomial}.  Since all copies of every element $j$ $(1\le j\le n)$ contain at most one descent index, it is clear that  $A_{{\bf k},i}=0$ if $i>n.$ All copies of the greatest element cannot be separated and can be put in any of $k_1 + \cdots + k_{n-1} + 1$ spaces between the other elements; this provides that
$$
|\GS_{\mathbf{k}}| = \prod_{i = 1}^{n -1} (k_1 + \cdots + k_i + 1).
$$

Define the rational functions $ G_{\mathbf{k}}(x)$ and $\G_{\mathbf{k}}(x) $ as
$$
G_{\mathbf{k}}(x) =  \frac{\sum_{i = 1}^{n} A_{\mathbf{k}, i} x^i }{(1-x)^{K + 1}}\quad 
\mbox{ and }\quad 
\G_{\mathbf{k}}(x) = \frac{\sum_{i = K - n + 1}^{K} A_{\mathbf{k}, K + 1 - i} x^i }{(1-x)^{K + 1}}.$$
These functions can be presented as formal power series of $x$ and  
let  $B_{\mathbf k}(m)$, $\B_{\bf k}(m)$ be their corresponding coefficients:  
$$
G_{\mathbf{k}}(x) = \sum_{m = 0}^{\infty} B_{\mathbf{k}}(m) x^m,  \qquad \G_{\mathbf{k}}(x) = 
\sum_{m = 0}^{\infty} \B_{\mathbf{k}}(m) x^m.
$$
In fact, these coefficients are polynomials in $m.$ Note that the series above yield
$$
B_{\mathbf{k}}(0) = 0 \text{ and } \B_{\mathbf{k}}(0) = \cdots = \B_{\mathbf{k}}(K - n) = 0.
$$
We call the polynomials $B_{\mathbf k}(m)$ and $\B_{\bf k}(m)$ {\it Stirling polynomials}. 
The reason for such terminology is that 
$$B_{\bf k}(m)=\stir{n+m}{m} \text{ and }\B_{\bf k}(m)=\ctir{m}{m-n},$$ 
where $\ctir{i}{j}$ and $\stir{i}{j}$ are Stirling numbers of the first and second kinds, if $k_i=2$ for all $i=1,2,\ldots,n$ (\cite{stirpol}).  

The aim of our paper is to give combinatorial interpretations of Stirling polynomials $B_{\mathbf{k}}(m), \B_{\mathbf{k}}(m)$ for all $k_1, \ldots, k_n$. 
Our approach is the following.
\begin{itemize}
\item Firstly, we apply the theory of $P$-partitions \cite{enum} and construct posets, we call them {\it $\mathbf{k}$-Stirling posets} $P_{\mathbf{k}}$, whose order polynomials $\Omega(P_{\mathbf{k}}, m), \overline\Omega(P_{\mathbf{k}}, m)$ equal to $B_{\mathbf{k}}(m), \B_{\mathbf{k}}(m),$ respectively. 

\item Next, we introduce the {\it $\mathbf{k}$-Stirling numbers of first and second kinds} $\ctirg{n}{m}, \stirg{n}{m}$, for which
$$B_{\mathbf{k}}(m) = \stirg{\ell + m}{m} \text{ and } \B_{\mathbf{k}}(m) = \ctirg{m}{m - \ell},$$ 
where $\ell = \ell(\mathbf{n})$ is a number of components of  $\mathbf{n}$ with multiplicities greater than $1,$ 
$$\ell=|\{ i\ |\ k_i>1, i=1,\ldots,n\}|.$$
Combinatorial meanings of $\stirg{n}{m}$ and $\ctirg{n}{m}$ are related to partitions of sets and permutation records. \\
If $\mathbf{k} = (1, 2, \ldots, 2)$, we call the $\mathbf{k}$-Stirling numbers as the {\it Stirling numbers of odd type}.\footnote{In \cite{knuth-opt} these numbers of the second kind were denoted as half-integer Stirling numbers.}
The case $\mathbf{k} = (1, \ldots, 1, 2, \ldots,1, \ldots, 1, 2)$ yields that the $\mathbf{k}$-Stirling numbers naturally generalize the central factorial numbers. 
\end{itemize}

\

{\bf Related work.} Gessel and Stanley \cite{stirpol} were first who introduced the notion of Stirling permutations and presented combinatorial interpretations for the coefficients of the polynomial $(1-x)^{2n+1}\sum \stir{n+m}{m} x^m$.  

Brenti \cite{brenti}, \cite{brenti-h} studied Stirling permutations in general case for all $k_i$. He has obtained algebraic properties of Stirling polynomials and proved that $B_{\mathbf{k}}(m + 1)$ is a Hilbert polynomial. Note that $\Omega(P, m + 1)$ is a Hilbert polynomial for any poset $P$, and therefore our construction of the ${\bf k}$-Stirling poset implies the same property for $B_{\mathbf{k}}(m + 1)$. 

For $k_1 = \cdots = k_n$, the $\mathbf{k}$-Stirling poset was introduced by Klingsberg and Schmalzried \cite{klingsberg}. Park \cite{park-p} also studied this case with extensions to $q$-Stirling numbers. 

Similar problems have been studied for the Legendre-Stirling and the Jacobi-Stirling numbers and polynomials. Egge \cite{egge} has presented a theory concerning the Legendre-Stirling permutations. Gessel, Lin and Zeng \cite{ges-jac} have applied the theory of $P$-partitions for the Jacobi-Stirling polynomials. In our notation, their combinatorial structures apparently work with $k_i = 1,2$ (for all $1 \le i \le n$), where no two consecutive $k_i$ equal to $1$.

\

{\bf Examples. }
\begin{center}
\begin{tabular}{|c|c|c|}
\hline
$\mathbf{k}$ & $B_{\mathbf{k}}(m)$ & $\B_{\mathbf{k}}(m)$\\
\hline
\hline
$(k_1, \ldots, k_n) = (1, \ldots, 1)$ & $m^n$ & $m^n$\\
$(k_1) = (k)$ & $\binom{k + m  -1}{k}$ & $\binom{m}{k}$\\
$(k_1, \ldots, k_n) = (1, \ldots, 1, 2)$ & $\sum_{i = 1}^{m} i^n$ & $\sum_{i = 1}^{m - 1} i^n$\\
$(k_1, \ldots, k_n) = (2, \ldots, 2)$ & $\stir{n+m}{m}$ & $\ctir{m}{m-n}$\\
$(k_1, \ldots, k_{2n}) = (1, 2, \ldots, 1, 2)$ & $T(2n+2m, 2m)$ & $t(2m, 2m-2n)$\\
\hline
\end{tabular}
\end{center}

Here $T(2i, 2j), t(2i, 2j)$ are the central factorial numbers \cite{riordan}.

\section{General properties of Stirling polynomials}

\begin{thm}\label{recmain}
Let $m$ be a positive integer. Then 
\begin{itemize}
\item 
$B_{\mathbf{k}}(m), \B_{\mathbf{k}}(m)$ are both polynomials in $m$ of degree $K$ with leading coefficients $|\GS_{\mathbf{k}}|/K!$ and 
$$B_{\mathbf{k}}(0) = B_{\mathbf{k}}(-1) = \cdots = B_{\mathbf{k}}(-K + n) = 0,\quad
B_{\mathbf{k}}(m) = (-1)^{K}\B_{\mathbf{k}}(-m).$$
\item if $k_n > 1,$  then 
\begin{align}
B_{\mathbf{k}}(m) = \sum_{i = 0}^m i B_{\mathbf{k} \setminus k_n}(i) \binom{k_n+m-i-2}{k_n - 2},\quad
\B_{\mathbf{k}}(m) = \sum_{i = 0}^{m - 1} i \B_{\mathbf{k} \setminus k_n}(i) \binom{m - i - 1}{k_n - 2},\label{5'}
\end{align}
where $\mathbf{k} \setminus k_n = (k_1, \ldots, k_{n-1})$.

\item 
$B_{\emptyset}(m)=1, B_{\mathbf{k}}(0) = 0$; and
\begin{equation}
\label{main}
B_{\mathbf{k}}(m) = \begin{cases}
B_{\mathbf{k}}(m - 1) + B_{\mathbf{k'}}(m), &\text{if $k_{n} > 1$;}\\
m B_{\mathbf{k'}}(m), &\text{if $k_{n} = 1$,}
\end{cases}
\end{equation}

\item 
$\B_{\emptyset}(m)=1, \B_{\mathbf{k}}(0) = 0$; and
\begin{equation}
\label{main'}
\B_{\mathbf{k}}(m) = \begin{cases}
\B_{\mathbf{k}}(m - 1) + \B_{\mathbf{k'}}(m - 1), &\text{if $k_{n} > 1$;}\\
m \B_{\mathbf{k'}}(m), &\text{if $k_{n} = 1$,}
\end{cases}
\end{equation}
where $\mathbf{k'} = (k_1, \ldots, k_{n-1}, k_n - 1)$.
\end{itemize}
\end{thm}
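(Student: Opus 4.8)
The plan is to push everything back onto the Eulerian numbers $A_{\mathbf{k},i}$ by two insertion arguments on Stirling permutations, and then transfer the resulting identities to the polynomials through the generating functions $G_{\mathbf{k}}$ and $\G_{\mathbf{k}}$. I would begin with the analytic assertions of the first bullet. Expanding $(1-x)^{-(K+1)} = \sum_{m\ge 0}\binom{m+K}{K}x^m$ in the definitions gives the closed forms
$$
B_{\mathbf{k}}(m) = \sum_{i=1}^{n} A_{\mathbf{k},i}\binom{m-i+K}{K}, \qquad \B_{\mathbf{k}}(m) = \sum_{i=1}^{n} A_{\mathbf{k},i}\binom{m+i-1}{K}.
$$
Each summand is a polynomial in $m$ of degree $K$ with leading coefficient $1/K!$, and these do not cancel, so both polynomials have degree exactly $K$ and leading coefficient $\big(\sum_i A_{\mathbf{k},i}\big)/K! = |\GS_{\mathbf{k}}|/K!$. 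The reciprocity $B_{\mathbf{k}}(m) = (-1)^{K}\B_{\mathbf{k}}(-m)$ then follows termwise from $\binom{-x}{K} = (-1)^{K}\binom{x+K-1}{K}$, and combining it with the vanishing $\B_{\mathbf{k}}(0) = \cdots = \B_{\mathbf{k}}(K-n) = 0$ already recorded before the theorem yields $B_{\mathbf{k}}(0) = B_{\mathbf{k}}(-1) = \cdots = B_{\mathbf{k}}(-K+n) = 0$.

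The combinatorial heart is (\ref{main}), which I would prove first. The guiding observation is that all copies of the largest letter $n$ form one consecutive block that contributes exactly one descent, located at its right boundary (either because the next letter is smaller or because it is the forced descent at position $K$), regardless of the block length. Hence, when $k_n>1$, lengthening this block by one copy is a descent-preserving bijection $\GS_{\mathbf{k}'}\to\GS_{\mathbf{k}}$, giving $A_{\mathbf{k},i}=A_{\mathbf{k}',i}$; since the numerators of $G_{\mathbf{k}}$ and $G_{\mathbf{k}'}$ then agree while the denominators differ by one factor $(1-x)$, this reads $(1-x)G_{\mathbf{k}}(x)=G_{\mathbf{k}'}(x)$, i.e. $B_{\mathbf{k}}(m)=B_{\mathbf{k}}(m-1)+B_{\mathbf{k}'}(m)$. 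When $k_n=1$ I would instead insert the single new maximal letter into each of the $K$ gaps of a permutation of $\mathbf{k}\setminus k_n$ and track the descent count: the insertion always creates a descent just after the new letter and turns the slot in front of it into an ascent, and a case analysis showing that exactly $i$ of the gaps keep the descent number equal to $i$ while the other $K+1-i$ raise it by one yields the classical recurrence $A_{\mathbf{k},i}=iA_{\mathbf{k}',i}+(K+1-i)A_{\mathbf{k}',i-1}$. This is equivalent to $G_{\mathbf{k}}(x)=x\,\frac{d}{dx}G_{\mathbf{k}'}(x)$ and hence to $B_{\mathbf{k}}(m)=m\,B_{\mathbf{k}'}(m)$; the base values $B_{\emptyset}\equiv 1$ and $B_{\mathbf{k}}(0)=0$ are read off directly.

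I would then deduce (\ref{main'}) from (\ref{main}) purely formally, avoiding a second combinatorial count, by substituting $m\mapsto -m$ and invoking the reciprocity of the first bullet. Using $\B_{\mathbf{k}}(m)=(-1)^{K}B_{\mathbf{k}}(-m)$ together with the fact that $\mathbf{k}'$ has total weight $K-1$ (so its reciprocity carries the opposite sign) turns $B_{\mathbf{k}}(m)=B_{\mathbf{k}}(m-1)+B_{\mathbf{k}'}(m)$ into $\B_{\mathbf{k}}(m)=\B_{\mathbf{k}}(m-1)+\B_{\mathbf{k}'}(m-1)$, and $B_{\mathbf{k}}(m)=m\,B_{\mathbf{k}'}(m)$ into $\B_{\mathbf{k}}(m)=m\,\B_{\mathbf{k}'}(m)$. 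Finally (\ref{5'}) follows by iterating the recurrences over the multiplicity of the last letter: writing $\mathbf{k}^{(j)}=(k_1,\ldots,k_{n-1},j)$, the case $k_n>1$ of (\ref{main}) telescopes to $B_{\mathbf{k}^{(j)}}(m)=\sum_{s=1}^{m}B_{\mathbf{k}^{(j-1)}}(s)$, so applying the summation operator $k_n-1$ times to $B_{\mathbf{k}^{(1)}}(m)=m\,B_{\mathbf{k}\setminus k_n}(m)$ produces the kernel $\binom{k_n+m-i-2}{k_n-2}$; the parallel telescoping of (\ref{main'}), with the shifted sum $\sum_{s=0}^{m-1}$, produces $\binom{m-i-1}{k_n-2}$. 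The one genuinely delicate point I anticipate is the descent bookkeeping in the single-insertion step --- pinning down the coefficients $i$ and $K+1-i$ in the presence of the forced descent at position $K$, where the gaps at the two ends must be treated separately; the remainder is formal manipulation of generating functions and binomial coefficients.
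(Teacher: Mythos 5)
Your proposal is correct, and it reaches the theorem by a genuinely different route than the paper's. The paper's single engine is its Lemma: the block-insertion recurrence \eqref{1n} for the Eulerian numbers is converted into the differential equations \eqref{4n}, \eqref{4n1} for $G_{\mathbf{k}}$, $\G_{\mathbf{k}}$ (the latter via $\G_{\mathbf{k}}(x)=(-1)^{K+1}G_{\mathbf{k}}(1/x)$), from which \eqref{5'} and both cases of \eqref{main}, \eqref{main'} are extracted; the degree, leading-coefficient and vanishing statements are then proved by induction on $K$ using \eqref{main}, and the reciprocity $B_{\mathbf{k}}(m)=(-1)^{K}\B_{\mathbf{k}}(-m)$ comes last, by checking that $(-1)^{K}B_{\mathbf{k}}(-m)$ satisfies the recurrences \eqref{main'}. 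You reorganize and replace the key steps: (i) the whole first bullet follows at once from the closed forms $B_{\mathbf{k}}(m)=\sum_{i}A_{\mathbf{k},i}\binom{m-i+K}{K}$ and $\B_{\mathbf{k}}(m)=\sum_{i}A_{\mathbf{k},i}\binom{m+i-1}{K}$, with reciprocity termwise from $\binom{-x}{K}=(-1)^{K}\binom{x+K-1}{K}$ --- no induction, and reciprocity becomes available before the recurrences rather than after; (ii) for \eqref{main} with $k_{n}>1$ you avoid differentiation altogether through the observation that adding one copy of $n$ to the block of $n$'s is a descent-preserving bijection $\GS_{\mathbf{k'}}\to\GS_{\mathbf{k}}$, so $A_{\mathbf{k},i}=A_{\mathbf{k'},i}$ and $(1-x)G_{\mathbf{k}}(x)=G_{\mathbf{k'}}(x)$, whereas the paper obtains the same identity by applying \eqref{4n} to both $\mathbf{k}$ and $\mathbf{k'}$ and cancelling (your $k_{n}=1$ case is exactly the paper's insertion lemma specialized to a single letter); (iii) you transfer \eqref{main} into \eqref{main'} by substituting $m\mapsto -m$ and using reciprocity with the sign flip between sizes $K$ and $K-1$ --- this is non-circular precisely because your reciprocity was proved independently of the recurrences --- while the paper reads \eqref{main'} off \eqref{4n1}; (iv) you recover \eqref{5'} by iterating the recurrences $k_{n}-1$ times with a hockey-stick summation, the discrete counterpart of the paper's expansion of $(1-x)^{-(k_{n}-1)}$. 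What your route buys is economy and transparency: no differential equations, no induction on $K$, and the analytic facts of the first bullet are immediate from the nonnegativity of the $A_{\mathbf{k},i}$ and $\sum_i A_{\mathbf{k},i}=|\GS_{\mathbf{k}}|$. What the paper's route buys is uniformity and reusability: \eqref{4n}, \eqref{4n1} deliver \eqref{5'} in one stroke for every $k_n$, and \eqref{5'} is the identity that gets iterated again in the proof of Theorem \ref{spart}. The one step you should write out in full is the one you yourself flagged --- the descent bookkeeping under the convention that the last position is always a descent --- and it goes through exactly as you say, both for the bijection (the block of $n$'s carries exactly one descent at its right end, whether or not it sits at the end of the word) and for the insertion count.
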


To prove Theorem \ref{recmain} we need the following
\begin{lm}
\label{thm1}
Let $\mathbf{k} \setminus k_n = (k_1, \ldots, k_{n-1})$.
The recurrence for $A_{\mathbf{k}, i}$ is given by: 
\begin{eqnarray} 
A_{\mathbf{k},i} = i \cdot A_{\mathbf{k} \setminus k_n, i} + (k_1 + \ldots + k_{n - 1} + 1 - (i-1)) \cdot A_{\mathbf{k} \setminus k_n,i-1},\label{1n}
\end{eqnarray}
 with $A_{(k), 1} = 1$ and $A_{\mathbf{k}, i} = 0$ if $i = 0$ or $i > n.$

The following differential equations hold for $G_{\mathbf{k}}(x), \G_{\mathbf{k}}(x)$:
\begin{equation}
G_{\mathbf{k}}(x) = \dfrac{x}{(1-x)^{k_n-1}} \frac{d(G_{\mathbf{k} \setminus k_n}(x))}{dx},\label{4n}
\end{equation}
\begin{equation}
\G_{\mathbf{k}}(x) = \dfrac{x^{k_n}}{(1-x)^{k_n-1}} \frac{d(\G_{\mathbf{k} \setminus k_n}(x))}{dx}.\label{4n1}
\end{equation}
\end{lm}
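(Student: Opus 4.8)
The plan is to establish the recurrence \eqref{1n} combinatorially, and then to obtain the differential equations \eqref{4n} and \eqref{4n1} as its generating-function reformulations. The starting point is the observation, already used in the introduction, that in any $\sigma \in \GS_{\mathbf{k}}$ the $k_n$ copies of the maximal letter $n$ must occur consecutively: between two equal entries every entry is forced to be at least as large, and no letter exceeds $n$. Hence deleting this block of $n$'s gives a bijection between $\GS_{\mathbf{k}}$ and the set of pairs $(\sigma', p)$, where $\sigma' \in \GS_{\mathbf{k} \setminus k_n}$ and $p \in \{0, 1, \ldots, K'\}$ records one of the $K' + 1$ gaps of $\sigma'$ into which the block $\underbrace{n \cdots n}_{k_n}$ is inserted, with $K' = k_1 + \cdots + k_{n-1}$.

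The first step is to track how the number of descents changes under this insertion. Writing $\mathrm{Des}(\sigma')$ for the descent set (which, by convention, always contains the final position $K'$), I would show that inserting the block into gap $p$ produces a permutation with exactly $|\mathrm{Des}(\sigma')| + 1 - [\,p \in \mathrm{Des}(\sigma')\,]$ descents, where the Iverson bracket is $1$ if $p$ is a descent position and $0$ otherwise (and $0$ for $p = 0$). The point is that within the block there are no descents, that the left boundary of the block never creates a descent (its left neighbour is smaller than $n$), and that the right boundary always creates one, except when the block is placed at the very end, where it merely relocates the forced terminal descent. Consequently, if $\sigma'$ has $d$ descents, then exactly $d$ gaps keep the count at $d$ (the terminal gap together with the $d-1$ interior descent positions) while the remaining $K' + 1 - d$ gaps raise it to $d + 1$. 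Summing over $\sigma'$ with $i$ and with $i-1$ descents yields \eqref{1n}; the base case $A_{(k),1} = 1$ records the single Stirling permutation $1^k$, whose only descent is the terminal one.

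Next I would translate \eqref{1n} into \eqref{4n}. Writing $P_{\mathbf{k}}(x) = \sum_i A_{\mathbf{k}, i} x^i$ for the numerator of $G_{\mathbf{k}}(x) = P_{\mathbf{k}}(x)/(1-x)^{K+1}$, a direct differentiation gives
\begin{equation*}
\frac{x}{(1-x)^{k_n - 1}}\,\frac{d}{dx}\,\frac{P_{\mathbf{k} \setminus k_n}(x)}{(1-x)^{K'+1}} = \frac{x\bigl[(1-x)P'_{\mathbf{k}\setminus k_n}(x) + (K'+1)P_{\mathbf{k}\setminus k_n}(x)\bigr]}{(1-x)^{K+1}},
\end{equation*}
since $K = K' + k_n$, so the denominator already matches that of $G_{\mathbf{k}}(x)$. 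It then remains to verify that the coefficient of $x^i$ in the numerator on the right is precisely $i A_{\mathbf{k}\setminus k_n, i} + (K' + 1 - (i-1)) A_{\mathbf{k} \setminus k_n, i-1}$, i.e. the right-hand side of \eqref{1n}; this is the same bookkeeping that produced the recurrence, now read off the generating function. This establishes \eqref{4n}.

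Finally, \eqref{4n1} follows by the identical mechanism applied to the numerator $Q_{\mathbf{k}}(x) = \sum_i A_{\mathbf{k}, i} x^{K + 1 - i}$ of $\G_{\mathbf{k}}(x)$; clearing the denominator $(1-x)^{K+1}$ and matching coefficients again reproduces \eqref{1n}, the only difference being that the exponents are now $K + 1 - i$ rather than $i$, so one must keep careful track of the shift $K = K' + k_n$ between the two numerators. Alternatively, one can avoid a second computation by noting the reversal identity $\G_{\mathbf{k}}(x) = (-1)^{K+1} G_{\mathbf{k}}(1/x)$ and substituting it into \eqref{4n}. I expect the descent-counting in the insertion step to be the main obstacle: the principal care is needed in handling the terminal-position convention for descents and the two block boundaries simultaneously, since a miscount there propagates into both differential equations.
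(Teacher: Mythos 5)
Your proposal is correct and takes essentially the same approach as the paper: the block-insertion argument with descent bookkeeping (including the terminal-descent convention) for \eqref{1n}, the derivative-and-coefficient-matching computation for \eqref{4n}, and, for \eqref{4n1}, the reversal identity $\G_{\mathbf{k}}(x) = (-1)^{K+1} G_{\mathbf{k}}(1/x)$ substituted into \eqref{4n}, which is precisely the paper's route. The only cosmetic differences are that the paper reads the coefficient identity in the other direction (recognizing the numerator sum as the derivative of $G_{\mathbf{k}\setminus k_n}$) rather than differentiating and matching, and that your case analysis of how insertion affects the descent count is somewhat more explicit than the paper's.
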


\begin{proof} The proof of \eqref{1n} is standard.  
Stirling permutations of the multiset $\{1^{k_1},\ldots,n^{k_n}\}$ can be obtained from Stirling permutations of the multiset $\{1^{k_1},\ldots,(n-1)^{k_{n-1}}\}$ by inserting the block $n^{k_{n}}$ in any of $k_1+\cdots+k_{n-1}+1$ spaces between the elements. 
Let $\sigma$ be the permutation of $\{1^{k_1},\ldots,(n-1)^{k_{n-1}}\}$
and $\sigma^{(t)}$ be the corresponding permutation of $\{1^{k_1},\ldots,n^{k_n}\},$ where the block $n^{k_{n}}$ is inserted to the $t$-th place of $\sigma$. If $t$ is a descent index of $\sigma,$ then $\des\sigma^{(t)}=\des\sigma.$ 
If $t$ is not a descent index of $\sigma$, then $\des\sigma^{(t)}=\des\sigma+1.$ In other words,  the block $n^{k_{n}}$ can be inserted in any of $i$ descents of $A_{\mathbf{k} \setminus k_n,i}$ permutations without producing a new descent; or it creates a new descent at any of $(k_1 + \ldots + k_{n - 1} + 1 - (i-1))$ positions (with no descent) of $A_{\mathbf{k} \setminus k_n,i-1}$ permutations. So,  \eqref{1n}  is proved. 

By (\ref{1n}) we have 
\begin{align*}
G_{\mathbf{k}}(x) &= \dfrac{\sum_{i = 1}^n A_{\mathbf{k},i} x^i}{(1-x)^{K+1}} \\
&= \frac{x}{(1 - x)^{k_n - 1}} \frac{\sum_{i = 1}^n (i A_{\mathbf{k} \setminus k_n, i} x^{i-1} + (k_1 + \cdots + k_{n-1} + 2 - i) A_{\mathbf{k} \setminus k_n, i-1} x^{i-1} )}{(1-x)^{k_1 + \cdots + k_{n-1} + 2}}\\
&= \frac{x}{(1 - x)^{k_n - 1}} d\left(\frac{\sum_{i = 1}^{n-1} A_{\mathbf{k} \setminus k_n, i} x^i}{(1-x)^{k_1 + \cdots + k_{n-1} + 1}} \right)/dx\\
&= \dfrac{x}{(1-x)^{k_n-1}} \frac{d(G_{\mathbf{k} \setminus k_n}(x))}{dx}.
\end{align*}
Note that  
$$
\G_{\mathbf{k}}(x) = (-1)^{K + 1} G_{\mathbf{k}}(1/x).
$$
Thus, from equation \eqref{4n}
\begin{equation*}
G_{\mathbf{k}}(1/x) = (-1)^{k_n-1}\dfrac{x^{k_n-2}}{(1-x)^{k_n-1}} \frac{d(G_{\mathbf{k} \setminus k_n}(1/x))}{d(1/x)}.
\end{equation*}
Therefore, 
\begin{equation*}
\G_{\mathbf{k}}(x) = -\dfrac{x^{k_n-2}}{(1-x)^{k_n-1}} \frac{d(\G_{\mathbf{k} \setminus k_n}(x))}{d(1/x)} = \dfrac{x^{k_n}}{(1-x)^{k_n-1}} \frac{d(\G_{\mathbf{k} \setminus k_n}(x))}{dx}.
\end{equation*}
\end{proof}

\begin{proof}[Proof of Theorem \ref{recmain}]
By \eqref{4n}, \eqref{4n1} we have
\begin{align*}
\sum_{m = 0}^{\infty} B_{\mathbf{k}}(m) x^m &= \frac{1}{(1-x)^{k_n - 1}} \sum_{j = 0}^{\infty} j B_{\mathbf{k} \setminus k_n}(j) x^j,\\
\sum_{m = 0}^{\infty} \B_{\mathbf{k}}(m) x^m &= \frac{1}{(1-x)^{k_n - 1}} \sum_{j = 0}^{\infty} j \B_{\mathbf{k} \setminus k_n}(j) x^{k_n + j - 1}.
\end{align*}
To obtain  \eqref{5'}, it remains to use the well known relation 
$\frac{1}{(1-x)^{k_n - 1}} = \sum_{i  = 0}^{\infty} \binom{k_n + i - 2}{k_n - 2} x^i$.

If $k_n > 1,$ then equations \eqref{4n}, \eqref{4n1} can be written as 
\begin{align*}
G_{\mathbf{k}}(x) &= \dfrac{x}{(1-x)^{k_n-1}} \frac{d(G_{\mathbf{k} \setminus k_n}(x))}{dx}\\
&= \frac{1}{(1-x)}\dfrac{x}{(1-x)^{k_n-2}} \frac{d(G_{\mathbf{k} \setminus k_n}(x))}{dx} \\
&= \frac{1}{(1-x)} G_{\mathbf{k'}}(x),
\end{align*}
\begin{align*}
\G_{\mathbf{k}}(x) &= \dfrac{x^{k_n}}{(1-x)^{k_n-1}} \frac{d(\G_{\mathbf{k} \setminus k_n}(x))}{dx}\\
&= \frac{x}{(1-x)}\dfrac{x^{k_n - 1}}{(1-x)^{k_n-2}} \frac{d(\G_{\mathbf{k} \setminus k_n}(x))}{dx} \\
&= \frac{x}{(1-x)} \G_{\mathbf{k'}}(x).
\end{align*}
Thus, we have
$$
G_{\mathbf{k}}(x) = \frac{1}{(1-x)} G_{\mathbf{k'}}(x),
\quad
\G_{\mathbf{k}}(x) = \frac{x}{(1-x)} \G_{\mathbf{k'}}(x),
$$
which provide us the first cases of recurrences \eqref{main}, \eqref{main'}.

If $k_n = 1$, then the second cases of \eqref{main}, \eqref{main'}  are easy consequences of equations \eqref{4n}, \eqref{4n1}.

Let us now prove by induction on $K$ that for every multiset $\mathbf{n} = \{1^{k_1}, \ldots, n^{k_n} \}$ having $K$ elements,
the polynomial $B_{\mathbf{k}}(m)$ is a polynomial in $m$ of degree $K$ with the leading coefficient  $|\GS_{\mathbf{k}}|/K!$ and $$B_{\mathbf{k}}(0) = B_{\mathbf{k}}(-1) = \cdots = B_{\mathbf{k}}(-K + n) = 0.$$

If $K = 0$, then $B_{\emptyset}(m) = 1$; if $K = 1,$ then $B_{\mathbf{k}}(m) = m$.

Suppose that the statement is true for all multisets having less than $K$ elements and let $\mathbf{n} = \{1^{k_1}, \ldots, n^{k_n} \}$ be any multiset having $K$ elements. 

If $k_n > 1$, then $B_{\mathbf{k'}}(m)$ is a polynomial in $m$ of degree $K - 1$ with the leading coefficient $$a = |\GS_{\mathbf{k'}}|/(K - 1)! = |\GS_{\mathbf{k}}|/K!$$ and 
$$B_{\mathbf{k'}}(0) = B_{\mathbf{k'}}(-1) = \cdots = B_{\mathbf{k'}}(-K + 1 + n) = 0.$$ Hence, by the recurrence \eqref{main} if $m$ is any positive integer, then 
$$B_{\mathbf{k}}(m) -B_{\mathbf{k}}(m - 1) = B_{\mathbf{k'}}(m)
\text{ or } 
B_{\mathbf{k}}(m) = \sum_{i = 1}^{m} B_{\mathbf{k'}}(i).$$ 
Therefore, $B_{\mathbf{k}}(m)$ is a polynomial in $m$ of degree $K$ with the leading coefficient $$a/K = |\GS_{\mathbf{k}}|/K!.$$
Hence, $$B_{\mathbf{k}}(m) -B_{\mathbf{k}}(m - 1) = B_{\mathbf{k'}}(m)$$ for any $m.$  So, 
$$B_{\mathbf{k}}(0) - B_{\mathbf{k}}(-m - 1)  = \sum_{i = -m}^{0} B_{\mathbf{k'}}(i).$$
By the definition, $B_{\mathbf{k}}(0) = 0$ and hence $$B_{\mathbf{k}}(0) = B_{\mathbf{k}}(-1) = \cdots = B_{\mathbf{k}}(-K + n) = 0.$$ 

If $k_n = 1,$ then $B_{\mathbf{k'}}(m)$ is a polynomial in $m$ of degree $K - 1$ with the leading coefficient $$a = |\GS_{\mathbf{k'}}|/(K - 1)! = |\GS_{\mathbf{k}}|/K!$$ and 
$$B_{\mathbf{k'}}(0) = B_{\mathbf{k'}}(-1) = \cdots = B_{\mathbf{k'}}(-K + 1 + n - 1) = 0.$$ By the recurrence relation \eqref{main}, $B_{\mathbf{k}}(m) = m B_{\mathbf{k'}}(m).$ Therefore, $B_{\mathbf{k}}(m)$ is a polynomial in $m$ of degree $K$ with the leading coefficient $$a = |\GS_{\mathbf{k'}}|/(K - 1)! =  |\GS_{\mathbf{k}}|/K!$$ and $$B_{\mathbf{k}}(0) = B_{\mathbf{k}}(-1) = \cdots = B_{\mathbf{k}}(-K + n) = 0.$$

Now, if we take a new polynomial $$f_{\mathbf{k}}(m) = (-1)^K B_{\mathbf{k}}(-m),$$ then $f_{\emptyset}(m) = 1,$ $f_{\mathbf{k}}(0) = 0$ and the recurrence \eqref{main} gives 
$$f_{\mathbf{k}}(m) = f_{\mathbf{k}}(m - 1)  + f_{\mathbf{k'}}(m - 1), \text{ if } k_n > 1$$ and $$f_{\mathbf{k}}(m) = m f_{\mathbf{k'}}(m), \text{ if } k_n = 1,$$ 
which implies that $f_{\mathbf{k}}(m) = \B_{\mathbf{k}}(m).$
\end{proof}

\section{Stirling polynomials as order polynomials}
Suppose that $P$ is a finite labeled partially ordered set  with the partial 
order $<_p$. 

\begin{dfn} Let $\Omega(P,m)$ be the
number of order-preserving maps 
$\sigma: P \to \{1,\dots, m\}$ 
and $\overline\Omega(P,m)$ be the
number of strict order-preserving maps 
$\overline{\sigma}: {P} \to \{1,\dots, m\},$
i.e.,
$\text{if } x <_p y \text{ then } \sigma(x) \le \sigma(y)$ and $\overline{\sigma}(x) <\overline{\sigma}(y)$.
\end{dfn}

It is known that $\Omega({P},m),$ $\overline\Omega({P},m)$ are polynomials in $m$ called \textit{order polynomials} and  $\Omega({P},m) = (-1)^{|{P}|}\overline\Omega({P},-m).$
(see \cite{enum}.)

Let us call an $s$-tuple $(q_1,\ldots q_{s})$ by an {\it $s$-serie with end} $q_{s}$ if $q_1=\ldots= q_{s-1}=1$ and $q_{s}>1$; or just by an {\it $s$-serie} if such $q_{s}$ does not exist. 
We say that the multiset ${\bf n}$ (or $n$-tuple  ${\bf k}$) has 
$$\text{{\it length} $\ell({\bf n})=\ell$ and {\it weight} $w({\bf n})=(a_1,\ldots,a_{\ell}; t_1,\ldots,t_{\ell}; a)$} $$
if ${\bf k}$ can be presented as a sequence of $a_i$-series with ends $t_i$ and $a$-serie:  
$$
(k_1, \ldots, k_n) = (\underbrace{1, \ldots, 1}_{a_1 - 1 \text{ ones}}, t_1, \ldots, \underbrace{1, \ldots, 1}_{a_{\ell} - 1 \text{ ones}}, t_{\ell}, \underbrace{1, \ldots, 1}_{a \text{ ones}}),
$$
where $a_i>0, t_i > 1$ for all $1\le i\le \ell$ and $a \ge 0.$  

For example, if $n=10$ and $k_1=1,k_2=1,k_3=3,k_4=2,k_5=1,k_6=1,k_7=1,k_8=2,k_9=5,k_{10}=6,$ then 
$${\bf k}=(1,1,3,2,1,1,1,2,5,6) \sim   
(1,1,3) \;(2) \; (1,1,1,2)\; (5)\; (6)$$
is a sequence of $a_i$-series, where $a_1=3, a_2=1,a_3=4, a_4=1, a_5=1,$ with ends $t_1=3, t_2=2, t_3=2, t_4=5, t_5=6.$ So, in this example, the multiset ${\bf n}$ has length $5$ and  weight $(3,1,4,1,1; 3,2,2,5,6; 0).$

Suppose that ${\bf n}=\{1^{k_1},\ldots,n^{k_n}\}$ has weight  $(a_1,\ldots,a_{\ell}; t_1,\ldots,t_{\ell}; a)$. Set 
$$
s_0 = 0, s_{i} = s_{i - 1} + a_i + t_i - 1 \text{ or } s_{i} = \sum_{j = 1}^{i} (a_j + t_j - 1) \text{ for } 1 \le i \le \ell.
$$
Define the {\it $\mathbf{k}$-Stirling poset} ${P}_{\mathbf{k}}$ by the following diagram: 

\begin{center}
\includegraphics{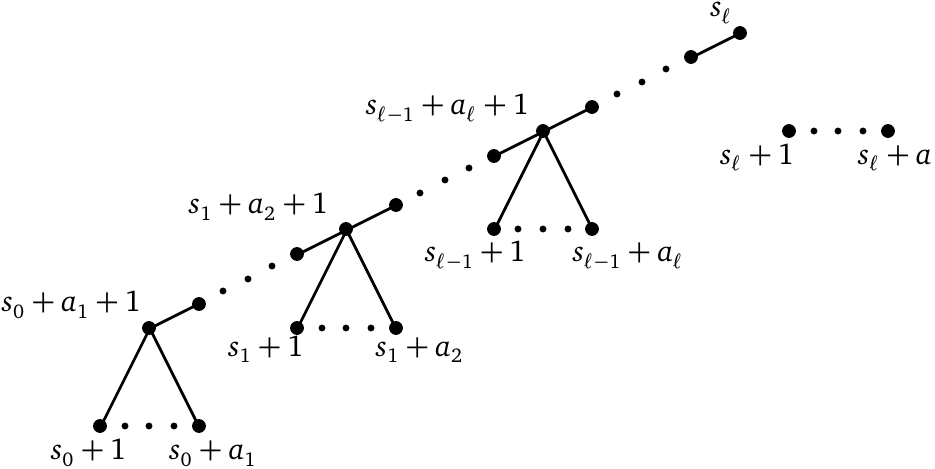}

{\bf Fig. 1.} The $\mathbf{k}$-Stirling poset $P_{\bf k}$.
\end{center}

\noindent Here the elements labeled by $s_{i-1} + 1, \ldots, s_{i-1} + a_i$ collapse to one, if $a_i=1.$  If $a > 0,$ then the elements with labels  $s_{\ell} + 1, \ldots, s_{\ell} + a$ are incomparable with other elements of $P_{\mathbf{k}}$.  
For example, if ${\bf k}=(1,1,2,3,1,2,2,1,1,1),$ then ${P}_{\mathbf{k}}$ is

\begin{center}
\includegraphics{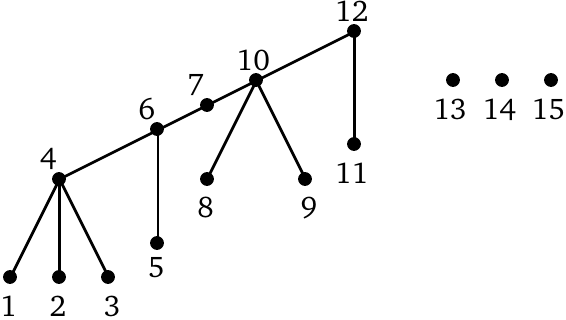}

{\bf Fig. 2.} The poset $P_{(1,1,2,3,1,2,2,1,1,1)}$.
\end{center}

\begin{thm}\label{poset}
 $B_{\mathbf{k}}(m) = \Omega({P}_{\mathbf{k}},m) \text{ and } \B_{\mathbf{k}}(m) = \overline\Omega({P}_{\mathbf{k}},m).$
\end{thm}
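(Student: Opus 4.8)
The plan is to prove both identities by induction on $K=|P_{\mathbf{k}}|$, showing that the order polynomial $\Omega(P_{\mathbf{k}},m)$ obeys exactly the recurrence and initial conditions that characterize $B_{\mathbf{k}}(m)$ in Theorem~\ref{recmain}, and then to deduce the strict statement from reciprocity. Since both $B_{\mathbf{k}}(m)$ and $\Omega(P_{\mathbf{k}},m)$ are polynomials vanishing at $m=0$ (the empty poset gives $\Omega(\emptyset,m)=1$, while $\Omega(P,0)=0$ for $P\neq\emptyset$), and since a polynomial is determined by its value at $0$ together with its first difference, it suffices to match the two defining reductions in \eqref{main}.

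Two elementary facts about order polynomials drive the induction. First, adjoining an element incomparable to everything multiplies the count by the number of available values: $\Omega(P\sqcup\{*\},m)=m\,\Omega(P,m)$ and likewise $\overline\Omega(P\sqcup\{*\},m)=m\,\overline\Omega(P,m)$. Second, if $z$ is a \emph{global} maximum of $P$, then splitting the order-preserving maps $\sigma$ according to whether $\sigma(z)=m$ or $\sigma(z)\le m-1$ gives $\Omega(P,m)=\Omega(P,m-1)+\Omega(P\setminus\{z\},m)$: because $z$ dominates all of $P$, the condition $\sigma(z)\le m-1$ forces every value into $\{1,\dots,m-1\}$, so these maps are counted by $\Omega(P,m-1)$, while any order-preserving map on $P\setminus\{z\}$ extends uniquely by $\sigma(z)=m$. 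These are precisely the shapes of the two branches of \eqref{main}.

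The combinatorial heart is to read off from Fig.~1 how $P_{\mathbf{k}}$ degenerates to $P_{\mathbf{k'}}$, where $\mathbf{k'}=(k_1,\dots,k_{n-1},k_n-1)$. When $k_n=1$ the last letter sits in the trailing $a$-serie as an isolated point, so $P_{\mathbf{k}}=P_{\mathbf{k'}}\sqcup\{*\}$ and the first fact yields $\Omega(P_{\mathbf{k}},m)=m\,\Omega(P_{\mathbf{k'}},m)$. When $k_n>1$ we have $a=0$, the chains of consecutive series are linked top-to-bottom, and hence the top $z$ of the last chain dominates all of $P_{\mathbf{k}}$; removing it gives $P_{\mathbf{k}}\setminus\{z\}=P_{\mathbf{k'}}$. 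Here one checks two sub-cases: if $k_n\ge 3$ the last chain merely shortens by one node, whereas if $k_n=2$ the last series loses its end, its $a_{\ell}$ minimal points become incomparable and join the trailing ones (so $\ell$ drops by one and $a$ becomes $a_{\ell}$); in either case the second fact reproduces the first branch of \eqref{main}. Verifying this degeneration precisely, together with the global-maximum property, is the step I expect to require the most care, since it is exactly where the geometry of the poset in Fig.~1 enters the argument.

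Finally, the strict statement follows without a separate recurrence. Having established $B_{\mathbf{k}}(m)=\Omega(P_{\mathbf{k}},m)$ as an identity of polynomials, I combine the reciprocity $B_{\mathbf{k}}(m)=(-1)^{K}\B_{\mathbf{k}}(-m)$ from Theorem~\ref{recmain} with the order-polynomial reciprocity $\Omega(P,-m)=(-1)^{|P|}\overline\Omega(P,m)$. Using $|P_{\mathbf{k}}|=K$,
$$\B_{\mathbf{k}}(m)=(-1)^{K}B_{\mathbf{k}}(-m)=(-1)^{K}\Omega(P_{\mathbf{k}},-m)=(-1)^{K}(-1)^{K}\overline\Omega(P_{\mathbf{k}},m)=\overline\Omega(P_{\mathbf{k}},m),$$
which completes the proof. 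Alternatively one could mirror the argument for $\Omega$ by proving directly that $\overline\Omega(P_{\mathbf{k}},m)$ satisfies \eqref{main'}, but this requires the more delicate bookkeeping of strict maps with a prescribed top value, so the reciprocity route is preferable.
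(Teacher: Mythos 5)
Your proposal is correct and follows essentially the same route as the paper: both match $\Omega(P_{\mathbf{k}},m)$ against the recurrence \eqref{main} via the two cases ($k_n=1$: isolated maximal label contributes a factor $m$; $k_n>1$: split on whether the global maximum takes the value $m$), and both then obtain the strict identity $\B_{\mathbf{k}}(m)=\overline\Omega(P_{\mathbf{k}},m)$ from order-polynomial reciprocity combined with $B_{\mathbf{k}}(m)=(-1)^K\B_{\mathbf{k}}(-m)$. If anything, your treatment is slightly more careful than the paper's, which glosses over the verification that deleting the top element yields exactly $P_{\mathbf{k'}}$ (including your $k_n=2$ sub-case) and even writes $P_{\mathbf{k}\setminus k_n}$ where $P_{\mathbf{k'}}$ is meant.
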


\begin{proof}
Let $v$ be the maximal label in ${P}_{\mathbf{k}}$ and $\mathbf{k} \setminus k_n = (k_1, \ldots, k_{n-1})$. 

\textit{Case 1.} If $k_n = 1$, then $v$ is incomparable with the other elements and $\sigma(v)$ can take any of $m$ values. Thus, $\Omega({P}_{\mathbf{k}},m) = m \Omega({P}_{\mathbf{k} \setminus k_n},m).$

\textit{Case 2.} If $k_n > 1$, then $\sigma(v)$ is the maximal value in the map $\sigma$ and two subcases are possible:

(a) if $\sigma(v) \le m-1,$ then the number of maps is equal to $\Omega({P}_{\mathbf{k}},m - 1)$;

(b) if $\sigma(v) = m,$ then the removal of $v$ gives us $\Omega({P}_{\mathbf{k} \setminus k_n},m)$ ways to map the remaining elements.

Hence, in Case 2 we have 
$$
\Omega({P}_{\mathbf{k}},m) = \Omega({P}_{\mathbf{k} \setminus k_n},m) + \Omega({P}_{\mathbf{k}},m - 1).
$$ 
So, $\Omega({P}_{\mathbf{k}},m)$ satisfies the same recurrence relation as (\ref{main}) of $B_{\mathbf{k}}(m)$ and it is easy to check that the initial values are also equal.

According to the reciprocity of order polynomials,
$\B_{\mathbf{k}}(m) = \overline\Omega({P}_{\mathbf{k}},m).$
\end{proof}

Particular cases of our construction were known before. 
For instance, the poset that induces Stirling numbers of the second kind $B_{(2, \ldots, 2)}(m) = \stir{n+m}{m}$, i.e., $\Omega({P}, m) = B_{(2, \ldots, 2)}(m)$, is the following \cite{klingsberg, park-p}:

\noindent\begin{center}
\begin{tabular}{c}
\includegraphics{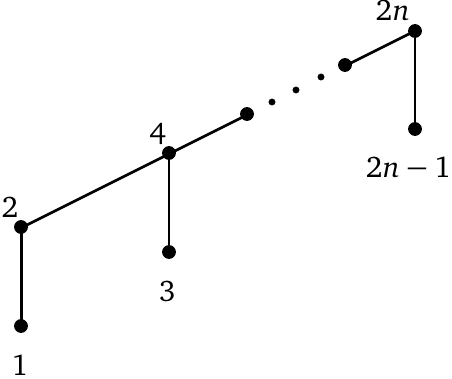}
\end{tabular}

{\bf Fig. 3.} The poset $P_{(2,\ldots,2)}$.
\end{center}

It gives 
$$
\Omega({P_{(2,\ldots,2)}}, m) = \sum_{1 \le \sigma(2) \le \cdots \le \sigma(2n) \le m} \sigma(2) \cdots \sigma(2n),$$
$$
\overline\Omega({P_{(2,\ldots,2)}}, m) = \sum_{1 \le \overline\sigma(2) < \cdots < \overline\sigma(2n) < m} \overline\sigma(2) \cdots \overline\sigma(2n).
$$

The poset ${P}_{(k, \ldots, k)}$ was constructed in \cite{klingsberg}.  

\section{Stirling polynomials as numbers of set partitions and permutation records}
\label{str}

Let $[n] = \{1, \ldots, n \}$.
For a family $\mathcal{F} = \{B_1, \ldots, B_m\}$ of nonempty sets (or multisets) let 
$\min(B_i)$ be the minimal element of $B_i$ and
$\min(\mathcal{F}) = \{\min(B_1), \ldots, \min(B_m)  \}$. We write $a \sim_{\mathcal{F}} b$ if $a, b \in B_j$ for some $j.$ 
We define a multiset $\mathrm{multiset}(\mathcal{F})$ as a merge sum of multisets:
$$\mathrm{multiset}(\mathcal{F}) = \uplus_{i = 1}^m B_i.$$ 
In other words, $a \sim_{\mathcal{F}} b$ if $a,b$ are in the same set 
and $\mathrm{multiset}(\mathcal{F})$ is a multiset of all elements of $B_i$.
For example, if $\mathcal{F} = \{\{1,3, 6 \}, \{ 2, 3, 3, 5\}, \{ 2, 4, 6 \} \},$ then $2 \sim_{\mathcal{F}} 3,$
$2 \sim_{\mathcal{F}} 4,$ but $2\not{\!\!\sim_{\mathcal{F}}} 1$  
and $\mathrm{multiset}({\mathcal{F}}) = \{ 1, 2^2, 3^3, 4, 5, 6^2\}.$

For the given set $U$ consider nonempty sets (or multisets) $B_1, \ldots, B_m$, where $B_i \subseteq U$ $(1 \le i \le m)$. Say that the family $\mathcal{F} = \{B_1, \ldots, B_m\}$ is \textit{segmented} if
for all $a < b < c$ ($a, b, c \in U$) the condition  $a \sim_{\mathcal{F}} c$ implies that $b \in \min(\mathcal{F})$.
For example, if $U = \{1, \ldots, 8\}$, then $\{\{ 1, 2\}, \{3, 3, 6 \}, \{ 4\}, \{5 \}, \{7, 7, 8\}\}$ is segmented and $\{\{ 1, 2\}, \{3, 3, 6 \}, \{ 4, 5 \}, \{7, 7, 8\}\}$ is not segmented, because $3 \sim_{\mathcal{F}} 6$ and $3 < 5 < 6$, $5 \not\in \min(\{\{ 1, 2\}, \{3, 3, 6 \}, \{ 4, 5 \} ,\{7, 7, 8\}\}) =\{1, 3, 4, 7 \}$. 

Let $\text{lmin}(\sigma)$ be the set of left-to-right minima of a permutation $\sigma$.  

Suppose now that $n,m$ ($n \ge m$) are given positive integers, $U = [n+1]$ and let $\mathbf{k}$ be any tuple with weight $(a_1,\ldots,a_{n-m}; t_1,\ldots,t_{n-m}; 0),$ which represents the type of some multiset of length $n - m$. Let $M = \max(a_1, \ldots, a_{n - m})$.

\begin{dfn}\label{smain}
A {\it $\mathbf{k}$-partition system} of $[n]$ into $m$ blocks is an ordered $(M+1)$-tuple $(\pi_0, \pi_1, \ldots, \pi_M)$ which satisfies the following properties:

(i) $\pi_1, \ldots, \pi_M$ are partitions of $[n]$ into nonempty blocks;

(ii) $\min(\pi_1) = \cdots = \min(\pi_M)$ and $|\mathrm{min}(\pi_1)| = m$;

(iii) if $\{x_1, \ldots, x_{n - m} \} = [n] \setminus \min(\pi_1)$ so that $x_1 < \cdots < x_{n - m}$, then for all $i$ $(1 \le i \le n - m)$ and $j > a_i$, we have $x_i \sim_{\pi_j} 1$;

(iv) $\pi_0$ is a segmented family of nonempty multisets such that $\min(\pi_0) = \min(\pi_1)$, $x_1 \sim_{\pi_0} 1$ and $\mathrm{multiset}(\pi_0) \subseteq \min(\pi_1) \uplus \{x_1, x_2^{t_1 -2}, \ldots, x_{n-m}^{t_{n-m-1} - 2}, (n+1)^{t_{n - m} - 2} \}$.
\end{dfn}

\begin{dfn}\label{smain1}
A {\it $\mathbf{k}$-permutation system} of $[n]$ having $m$ left-to-right minima is an ordered $(M + 1)$-tuple $(\sigma_0, \sigma_1, \ldots, \sigma_M)$ which satisfies the following properties:

(i) $\sigma_1, \ldots, \sigma_M$ are permutations of $[n]$;

(ii) $\text{lmin}(\sigma_1) = \cdots = \text{lmin}(\sigma_M)$ and $|\mathrm{lmin}(\sigma_1)| = m$;

(iii) if $\{x_1, \ldots, x_{n - m} \} = [n] \setminus \text{lmin}(\sigma_1)$ so that $x_1 < \cdots < x_{n - m}$, then for any $i$ $(1 \le i \le n - m)$ and $j > a_i$, if $\sigma_j(p) = x_i$, then $\sigma_j(q) > x_i$ for all $q > p$;

(iv) $\sigma_0$ is a segmented family of nonempty sets such that $\min(\sigma_0) = \mathrm{lmin}(\sigma_1)$, $x_1 \sim_{\sigma_0} 1$ and $\mathrm{multiset}(\sigma_0) = \mathrm{lmin}(\sigma_1) \uplus \{x_1, x_2^{t_1 -2}, \ldots, x_{n-m}^{t_{n-m-1} - 2}, (n+1)^{t_{n - m} - 2} \}$.
\end{dfn}

\begin{dfn}
Let $\stirg{n}{m}$ be the number of $\mathbf{k}$-partition systems of $[n]$ into $m$ blocks and $\ctirg{n}{m}$ be the number of  $\mathbf{k}$-permutation systems of $[n]$ having $m$ left-to-right minima. 
\end{dfn}

{\bf Example} (see Definition 2.) Let $n = 10, m = 5,$ $\mathbf{k} = (1, 1, 3, 2, 1, 1, 1, 2, 5, 6)$. Then 
$(a_1, a_2, a_3, a_4, a_5) = (3,1,4,1,1)$ and $(t_1, t_2, t_3, t_4, t_5) = (3, 2, 2, 5, 6)$ and $(\pi_0, \pi_1, \pi_2, \pi_3, \pi_4)$ is a $\mathbf{k}$-partition system of $\{1, \ldots, 10\}$ into $5$ blocks, where

\begin{tabular}{clllll}
$\pi_1=$ & $\{\mathbf{1}, 7 \}$ & $\{\mathbf{2}, 3 \}$ & $\{\mathbf{4}, 5, 10 \}$ & $\{\mathbf{6} \}$ & $\{\mathbf{8}, 9 \}$;\\ 
$\pi_2=$ & $\{\mathbf{1}, 3, 5, 9, 10 \}$ & $\{\mathbf{2}, 7 \}$ & $\{\mathbf{4} \}$ & $\{\mathbf{6} \}$ & $\{\mathbf{8} \}$;\\ 
$\pi_3=$ & $\{\mathbf{1}, 5, 9, 10 \}$ & $\{\mathbf{2}, 3 \}$ & $\{\mathbf{4},7 \}$ & $\{\mathbf{6} \}$ & $\{\mathbf{8} \}$;\\ 
$\pi_4=$ & $\{\mathbf{1},3, 5, 9, 10 \}$ & $\{\mathbf{2} \}$ & $\{\mathbf{4} \}$ & $\{\mathbf{6},7 \}$ & $\{\mathbf{8} \}$;\\ 
$\pi_0=$ & $\{\mathbf{1}, 3\}$ & $\{\mathbf{2} \}$ & $\{\mathbf{4}, 5 \}$ & $\{\mathbf{6} \}$ & $\{\mathbf{8} \}$.\\ 
\end{tabular}

Another possible configuration is

\begin{tabular}{clllll}
$\pi_1=$ & $\{\mathbf{1}, 7 \}$ & $\{\mathbf{2}, 3 \}$ & $\{\mathbf{4}, 5, 9 \}$ & $\{\mathbf{6}, 8 \}$ & $\{\mathbf{10}\}$;\\ 
$\pi_2=$ & $\{\mathbf{1}, 3, 5, 8, 9 \}$ & $\{\mathbf{2}, 7 \}$ & $\{\mathbf{4} \}$ & $\{\mathbf{6} \}$ & $\{\mathbf{10} \}$;\\ 
$\pi_3=$ & $\{\mathbf{1}, 5, 8, 9 \}$ & $\{\mathbf{2}, 3 \}$ & $\{\mathbf{4},7 \}$ & $\{\mathbf{6} \}$ & $\{\mathbf{10} \}$;\\ 
$\pi_4=$ & $\{\mathbf{1},3, 5, 8, 9 \}$ & $\{\mathbf{2} \}$ & $\{\mathbf{4} \}$ & $\{\mathbf{6},7 \}$ & $\{\mathbf{10} \}$;\\ 
$\pi_0=$ & $\{\mathbf{1}, 3\}$ & $\{\mathbf{2} \}$ & $\{\mathbf{4}, 5 \}$ & $\{\mathbf{6} \}$ & $\{\mathbf{10}, 11^3 \}$.\\ 
\end{tabular}

\

Suppose now that $n = 4, m = 2,$ $\mathbf{k} = (1, 3, 1, 4)$. Then $(a_1, a_2)= (2, 2),$ $(t_1, t_2) = (3, 4)$ and we list the ways to form all $27$ $\mathbf{k}$-partition systems $(\pi_0, \pi_1, \pi_2)$ of $\{1, 2, 3, 4\}$ into $2$ blocks.

Ways 1-16: \ \
\begin{tabular}{cll}
$\pi_1=$ & $\{\mathbf{1}, {\textcolor{gray}{3, 4}}\}$ & $\{\mathbf{2}, {\textcolor{gray}{3, 4}}\}$;\\ 
$\pi_2=$ & $\{\mathbf{1}, {\textcolor{gray}{3, 4}}\}$ & $\{\mathbf{2}, {\textcolor{gray}{3, 4}} \}$;\\ 
$\pi_0=$ & $\{\mathbf{1}, 3\}$ & $\{\mathbf{2} \}$.\\ 
\end{tabular}

put $\textcolor{gray}{3, 4}$: ($4$ ways in $\pi_1$) and ($4$ ways in $\pi_2$);

Ways 17-24:
\begin{tabular}{cll}
$\pi_1=$ & $\{\mathbf{1}, 2, {\textcolor{gray}{4}}\}$ & $\{\mathbf{3}, {\textcolor{gray}{4}}\}$;\\ 
$\pi_2=$ & $\{\mathbf{1}, 2, {\textcolor{gray}{4}}\}$ & $\{\mathbf{3}, {\textcolor{gray}{4}} \}$;\\ 
$\pi_0=$ & $\{\mathbf{1}, 2\}$ & $\{\mathbf{3}, {\textcolor{gray}{4}} \}$.\\ 
\end{tabular}

put $\textcolor{gray}{4}$: ($2$ ways in $\pi_1$) and ($2$ ways in $\pi_2$) and ($2$ ways to put or not $4$ in $\pi_0$);

Ways 25-27:
\begin{tabular}{cll}
$\pi_1=$ & $\{\mathbf{1}, 2, 3\}$ & $\{\mathbf{4}\}$;\\ 
$\pi_2=$ & $\{\mathbf{1}, 2, 3\}$ & $\{\mathbf{4}\}$;\\ 
$\pi_0=$ & $\{\mathbf{1}, 2\}$ & $\{\mathbf{4}, {\textcolor{gray}{5, 5}} \}$.\\ 
\end{tabular}

put $\textcolor{gray}{5, 5}$: $3$ ways $\{\mathbf{4}\}$, $\{\mathbf{4}, 5\}$, $\{\mathbf{4}, 5, 5\}$ (note that the element $3$ can not be put in $\pi_0$ as $\{\mathbf{1}, 2, 3\}$ because the segmented partition has $1 < 2 < 3$, and hence $2$ should be a block minimum).

Therefore, 
$$S_{(1, 3, 1, 4)}(4, 2) = 27.$$

{\bf Example} (see Definition 3.) Let $\mathbf{k} = (1, 1, 3, 2, 1, 1, 1, 2, 3, 3).$ Then
$(a_1, a_2, a_3, a_4, a_5) = (3,1,4,1,1)$ and $(t_1, t_2, t_3, t_4, t_5) = (3, 2, 2, 3, 3)$ and $(\sigma_0, \sigma_1, \sigma_2, \sigma_3, \sigma_4)$ is a $\mathbf{k}$-permutation system of $\{1, \ldots, 10\}$ having $5$ left-to-right minima, where

\begin{tabular}{cl}
$\sigma_1 = $ & $({\mathbf{10}}, {\mathbf{8}}, 9, {\mathbf{7}}, {\mathbf{3}}, 5, 6, 4, {\mathbf{1}}, 2)$;\\
$\sigma_2 = $ & $({\mathbf{10}}, {\mathbf{8}}, {\mathbf{7}}, {\mathbf{3}}, {\mathbf{1}}, 5, 2, 4, 6, 9)$;\\
$\sigma_3 = $ & $({\mathbf{10}}, {\mathbf{8}}, {\mathbf{7}}, {\mathbf{3}}, {\mathbf{1}}, 2, 5, 4, 6, 9)$;\\
$\sigma_4 = $ & $({\mathbf{10}}, {\mathbf{8}}, {\mathbf{7}}, {\mathbf{3}}, {\mathbf{1}}, 2, 4, 5, 6, 9)$;\\
$\sigma_0 = $ & $\{{\mathbf{10}},11 \}, \{ {\mathbf{8}} \}, \{{\mathbf{7}}, 9 \}, \{{\mathbf{3}}, 4 \}, \{{\mathbf{1}}, 2\}$.\\
\end{tabular}

Another possible configuration is

\begin{tabular}{cl}
$\sigma_1 = $ & $({\mathbf{10}}, {\mathbf{8}}, {\mathbf{7}}, 9, {\mathbf{3}}, {\mathbf{1}}, 6, 5, 4, 2)$;\\
$\sigma_2 = $ & $({\mathbf{10}}, {\mathbf{8}}, {\mathbf{7}}, {\mathbf{3}}, {\mathbf{1}}, 2, 4, 5, 6, 9)$;\\
$\sigma_3 = $ & $({\mathbf{10}}, {\mathbf{8}}, {\mathbf{7}}, {\mathbf{3}}, 5, {\mathbf{1}}, 2, 4, 6, 9)$;\\
$\sigma_4 = $ & $({\mathbf{10}}, {\mathbf{8}}, {\mathbf{7}}, {\mathbf{3}}, {\mathbf{1}}, 2, 5, 4, 6, 9)$;\\
$\sigma_0 = $ & $\{{\mathbf{10}},11 \}, \{ {\mathbf{8}}, 9 \}, \{{\mathbf{7}} \}, \{{\mathbf{3}}, 4 \}, \{{\mathbf{1}}, 2\}$.\\
\end{tabular}

\

Suppose that $n = 6, m = 4$, $\mathbf{k} = (1, 3, 1, 4)$. Then $(a_1, a_2) = (2, 2),$ $(t_1, t_2) = (3, 4)$ and we list the ways to form all 9 $\mathbf{k}$-permutations $(\sigma_0, \sigma_1, \sigma_2)$ of $\{1, 2,3,4,5, 6\}$ having $4$ left-to-right minima. 
One can check that from all possible configurations of left-to-right minima the only valid here is $\{6,5,3,1\}$ and we have

\begin{tabular}{ll}
$\sigma_1 = $ & $({\mathbf{6}}, {\mathbf{5}}, {\mathbf{3}}, {\textcolor{gray}{4}}, {\mathbf{1}}, {\textcolor{gray}{4}}, 2, {\textcolor{gray}{4}})$;\\
$\sigma_2 = $ & $({\mathbf{6}}, {\mathbf{5}}, {\mathbf{3}}, {\textcolor{gray}{4}}, {\mathbf{1}}, {\textcolor{gray}{4}}, 2, {\textcolor{gray}{4}})$;\\
$\sigma_0 = $ & $\{{\mathbf{6}}, 7\}, \{{\mathbf{5}}, 7 \}, \{{\mathbf{3}}, 4 \}, \{{\mathbf{1}, 2}\}$;\\
\end{tabular}

there are $3$ ways to place $4$ in $\sigma_1$, $3$ ways to place $4$ in $\sigma_2$, only $1$ way to form $\sigma_0$, which totally gives $9$ possible configurations. So,
$$s_{(1,3,1,4)}(6,4) = 9.$$

\begin{thm}\label{spart} If the multiset ${\bf n}=\{1^{k_1},\ldots, n^{k_n}\}$ has length $\ell$ and weight $(a_1,\ldots,a_{\ell}; t_1,\ldots,t_{\ell}; 0),$ then  
$$B_{\bf k}(m) = \stirg{\ell + m}{m},$$ 
$$\B_{\bf k }(m) = \ctirg{m}{m - \ell}.$$
Moreover, 
\begin{equation}\label{Sk}
 \stirg{\ell + m}{m}= \sum_{1 \le i_1 \le \cdots \le i_{\ell} \le m}
i_1^{a_1} \dots i_{\ell}^{a_{\ell}} 
\binom{t_1 + i_2 - i_1 - 2}{i_2 - i_1} \cdots 
\binom{t_{\ell} + m - i_{\ell} - 2}{m - i_{\ell}},
\end{equation}
\noindent and
\begin{equation}\label{sk}
\ctirg{m}{m - \ell} 
= \sum_{1 \le i_1 < \cdots < i_{\ell} < m}
i_1^{a_1} \dots i_{\ell}^{a_{\ell}} 
\binom{i_2 - i_1 - 1}{t_1 - 2} \cdots 
\binom{m - i_{\ell} - 1}{t_{\ell} - 2}.
\end{equation}
\end{thm}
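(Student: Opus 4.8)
The plan is to establish the two explicit formulas \eqref{Sk} and \eqref{sk} first, and then read off the identities $B_{\mathbf{k}}(m) = \stirg{\ell+m}{m}$ and $\B_{\mathbf{k}}(m) = \ctirg{m}{m-\ell}$ by checking that both sides are counted by the same sum. By Theorem \ref{poset} it suffices to evaluate the order polynomials $\Omega(P_{\mathbf{k}},m)$ and $\overline\Omega(P_{\mathbf{k}},m)$ directly from the diagram in Fig.~1. Reading that diagram, $P_{\mathbf{k}}$ is an increasing spine carrying $\ell$ distinguished elements $b_1 <_p \cdots <_p b_\ell$ (the ``ends'' of the successive series), where consecutive $b_j,b_{j+1}$ are joined by a chain of $t_j-2$ elements coming from the block of multiplicity $t_j$, and where $a_j$ pairwise incomparable minimal elements hang below each $b_j$. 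I would compute $\Omega(P_{\mathbf{k}},m)$ by summing over the values $i_j:=\sigma(b_j)$: the $a_j$ pendants below $b_j$ range independently over $\{1,\ldots,i_j\}$ and contribute $i_j^{a_j}$, while the chain of $t_j-2$ elements between $b_j$ and $b_{j+1}$ runs over weakly increasing values in $[i_j,i_{j+1}]$ and contributes $\binom{t_j+i_{j+1}-i_j-2}{i_{j+1}-i_j}$, with the convention $i_{\ell+1}=m$. Collecting these factors over $1\le i_1\le\cdots\le i_\ell\le m$ gives exactly \eqref{Sk}. For \eqref{sk} I would repeat the computation with strict maps, setting $i_j:=\overline\sigma(b_j)-1$: the strict chains force gaps and turn the weak multiset counts into $\binom{i_{j+1}-i_j-1}{t_j-2}$, the pendants still give $i_j^{a_j}$ (the terms with $i_j=0$ vanishing), and the admissible range becomes $1\le i_1<\cdots<i_\ell<m$. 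Together with Theorem \ref{poset} this yields \eqref{Sk}$\,=B_{\mathbf{k}}(m)$ and \eqref{sk}$\,=\B_{\mathbf{k}}(m)$.

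With the two sums in hand, the remaining task is to prove that the number of $\mathbf{k}$-partition systems (Definition \ref{smain}) equals \eqref{Sk} and that the number of $\mathbf{k}$-permutation systems (Definition \ref{smain1}) equals \eqref{sk}; by the definition of $\stirg{n}{m}$ and $\ctirg{n}{m}$ this gives the two identities. For a partition system $(\pi_0,\ldots,\pi_M)$ of $[\ell+m]$, I would first record its common minima set $S=\min(\pi_1)$ of size $m$ and set $i_j:=|\{s\in S:s<x_j\}|$, where $x_1<\cdots<x_\ell$ are the non-minima. Since $1\in S$, the $i_j$ are weakly increasing with $1\le i_j\le m$, and $S\mapsto(i_1,\ldots,i_\ell)$ is a bijection onto the index set of \eqref{Sk} (the interleaving pattern of minima and non-minima is recovered from the $i_j$, and the two counts agree). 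Next, property (iii) forces $x_j$ into the block of $1$ in every $\pi_r$ with $r>a_j$, while for $r\le a_j$ it may join any of the $i_j$ blocks with minimum smaller than $x_j$; these choices are independent across $r$ and $j$ and preserve the minima set, so they contribute $\prod_j i_j^{a_j}$. The one factor left to produce is the number of admissible families $\pi_0$, which I expect to equal $\prod_j\binom{t_j+i_{j+1}-i_j-2}{i_{j+1}-i_j}$; summing over $S$ then reproduces \eqref{Sk}. The permutation case is parallel, with $i_j:=|\{\text{left-to-right minima}<x_j\}|+(j-1)$ counting the legal insertion slots for $x_j$ (so the $i_j$ are now strictly increasing, matching the range of \eqref{sk}), the placements of $x_j$ in $\sigma_1,\ldots,\sigma_{a_j}$ giving $\prod_j i_j^{a_j}$, and $\sigma_0$ supplying the binomial product.

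The main obstacle is this last factor: showing that the segmented family $\pi_0$ (resp.\ $\sigma_0$) admits exactly $\prod_j\binom{t_j+i_{j+1}-i_j-2}{i_{j+1}-i_j}$ (resp.\ $\prod_j\binom{i_{j+1}-i_j-1}{t_j-2}$) configurations. Here I would argue that condition (iv) makes available $t_j-2$ interchangeable extra copies of $x_{j+1}$ (and $t_\ell-2$ copies of $n+1$ at the top), and that the segmented condition — forcing any element squeezed strictly between two members of a common block to be a block minimum — pins down precisely the blocks into which these copies may be distributed, namely the positions governed by the minima lying between the levels $i_j$ and $i_{j+1}$. Counting distributions of $t_j-2$ identical copies among these positions is a stars-and-bars computation returning the desired multiset coefficient, which is exactly the chain-segment count from the order-polynomial evaluation of the first step; matching the two is what ties the combinatorial model to the poset. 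I expect tracking how the segmentation constraint localizes the extra copies level by level to be the most delicate part, and I would test it against the worked examples following Definitions \ref{smain} and \ref{smain1} before writing the general argument.
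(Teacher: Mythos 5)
Your proposal is correct, and it reaches the theorem by a partly different route than the paper. The second half of your argument --- counting the $\mathbf{k}$-partition and $\mathbf{k}$-permutation systems of Definitions~\ref{smain} and~\ref{smain1} by fixing the common set of (left-to-right) minima, recording $i_j$, getting $\prod_j i_j^{a_j}$ from the free placements of $x_j$ in $\pi_1,\ldots,\pi_{a_j}$ (resp.\ $\sigma_1,\ldots,\sigma_{a_j}$), and extracting the binomial products from the segmented family $\pi_0$ (resp.\ $\sigma_0$) --- is exactly the paper's proof. Where you genuinely diverge is in identifying $B_{\mathbf{k}}(m)$ and $\B_{\mathbf{k}}(m)$ with the sums \eqref{Sk}, \eqref{sk}: the paper does this analytically, by iterating the convolution \eqref{5'} together with the case $k_n=1$ of \eqref{main}, whereas you evaluate $\Omega(P_{\mathbf{k}},m)$ and $\overline\Omega(P_{\mathbf{k}},m)$ directly from the structure of the $\mathbf{k}$-Stirling poset and invoke Theorem~\ref{poset}. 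Your reading of Fig.~1 (a spine $b_1<_p\cdots<_p b_\ell$ with a chain of $t_j-2$ elements between consecutive $b_j$'s and a chain of $t_\ell-2$ elements on top, and $a_j$ pendant minimal elements under each $b_j$) is the correct one: it is forced by the recursive description implicit in the proof of Theorem~\ref{poset} (adding a global maximum when $k_n>1$, an isolated point when $k_n=1$) and agrees with Fig.~3. Your evaluation of both order polynomials is also right, including the substitution $i_j=\overline\sigma(b_j)-1$ and the observation that terms with $i_1=0$ vanish. What your route buys is transparency --- each factor of the summand is matched to a visible layer of the poset, and no generating-function manipulation is needed; what the paper's route buys is independence from the figure, since \eqref{5'} is available in purely algebraic form from Theorem~\ref{recmain}.

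One detail to handle carefully when you write out the count of $\pi_0$ (the step you yourself flag as delicate): the two cases are not symmetric. In Definition~\ref{smain}(iv) the containment $\mathrm{multiset}(\pi_0)\subseteq\cdots$ means copies of $x_j$ may also be left unused, so the stars-and-bars count has $i_j-i_{j-1}+1$ boxes (the admissible blocks, whose minima lie strictly between $x_{j-1}$ and $x_j$, \emph{plus} a discard box); this is what produces the multiset coefficient $\binom{t_{j-1}+i_j-i_{j-1}-2}{i_j-i_{j-1}}$, and distributing the copies among the admissible blocks alone would give the wrong answer. In Definition~\ref{smain1}(iv), by contrast, the equality of multisets and the fact that the blocks of $\sigma_0$ are sets force every copy to be used and to land in a distinct admissible block, giving the plain binomial coefficient $\binom{i_j-i_{j-1}-1}{t_{j-1}-2}$ (note also that here the number of admissible blocks is $i_j-i_{j-1}-1$, since your $i_j$ counts all elements below $x_j$, not just minima). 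With that distinction made explicit, your argument is complete.
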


\begin{proof} Let us first prove that $B_{\mathbf{k}}(m) = \stirg{\ell + m}{m}.$ Set $M = \max(a_1, \ldots, a_{\ell})$. We can fix $m$ minimal elements of $[\ell + m]$ that are common for $(\pi_0, \ldots, \pi_M)$ and consider the elements $\{x_1, \ldots, x_{\ell} \} = [\ell + m] \setminus \min(\pi_1)$, so that $x_1 < \ldots < x_{\ell}$. For any $j$ $(1 \le j \le \ell)$ denote 
$$
i_j = |\{x\ |\ x < x_j, x \in \min(\pi_1) \}|.
$$
Then, according to the property (iii) from Definition \ref{smain} above, the element $x_j$ can be placed in $i_j$ blocks of any of partitions $(\pi_1, \ldots, \pi_{a_j})$, for every $1 \le j \le \ell$. This provides $i_j^{a_j}$ ways to place $x_j$ and totally $i_1^{a_1} \dots i_{\ell}^{a_{\ell}}$ ways to place all the elements $x_1, \ldots, x_{\ell}$, if the minimal elements are fixed.

Consider now the number of ways to form $\pi_0$. The element $x_1$ is already placed with the minimal element $1$. 
Let $x_{\ell + 1} = m + 1$ and $p_1 = 1.$ Then, according to the segmented property, for every $j$ $(2 \le j \le \ell + 1)$, the element $x_j$ can be placed only in multisets whose minimal elements $p_j$ are greater than $x_{j - 1}$; or $x_j$ is placed nowhere. So, for any $j$ $(2 \le j \le \ell + 1)$ there are $i_{j} - i_{j - 1} + 1$ ways to put $t_{j - 1} - 2$ copies of the element $x_j$, which gives $\binom{t_{j-1} - 2 + i_{j} - i_{j - 1}}{i_{j} - i_{j-1}}$ ways.

Thus, for this fixed arrangement of minimal elements, we have 
$$
i_1^{a_1} \dots i_{\ell}^{a_{\ell}} 
\binom{t_1 + i_2 - i_1 - 2}{i_2 - i_1} \cdots 
\binom{t_{\ell} + m - i_{\ell} - 2}{m - i_{\ell}}
$$
ways to form $(M+1)$-tuple $(\pi_0, \ldots, \pi_M)$. Note that it holds for the arbitrary sequence satisfying $1 \le i_1 \le \dots \le i_{\ell} \le m$. So, we have established  (\ref{Sk}).

Iterative use of equation (\ref{5'}) gives the same formula for $B_{\mathbf{k}}(m)$:
\begin{equation*}
B_{\mathbf{k}}(m) = 
\sum_{1 \le i_1 \le \dots \le i_{\ell} \le m}
i_1^{a_1} \dots i_{\ell}^{a_{\ell}} 
\binom{t_1 + i_2 - i_1 - 2}{i_2 - i_1} \dots 
\binom{t_{\ell} + m - i_{\ell} - 2}{m - i_{\ell}}.
\end{equation*}

Let us now prove that $\B_{\mathbf{k}}(m) = \ctirg{m}{m - \ell}.$
To do that we form permutations $(\sigma_1, \ldots, \sigma_M)$, which all have the same set of left-to-right minima. In all these permutations we write the minima in decreasing order  and look at the number of ways to place the remaining elements $x_1, \ldots, x_{\ell}$, satisfying $x_1 < \cdots < x_{\ell}$. Then, according to the property (iii) from Definition \ref{smain1}, the element $x_j$ can be placed to the right of any of $x_j - 1$ elements in permutations $(\sigma_1, \ldots, \sigma_{a_j})$, for any $1 \le j \le \ell$. Note that in the other permutations $(\sigma_{a_j + 1}, \ldots, \sigma_{M})$ the element $x_j$ can be put only in one place -- the rightmost position where the next elements are greater than $x_j$, which satisfies (iii) from Definition \ref{smain1}. This gives $(x_j-1)^{a_j}$ ways to place $x_j$ and totally $(x_1 - 1)^{a_1} \dots (x_{\ell} - 1)^{a_{\ell}}$ ways to place all the elements $x_1, \ldots, x_{\ell}$, if the left-to-right minima are fixed.

Now we count the number of ways to form $\sigma_0$. Let $x_{\ell + 1} = m + 1$ and $p_1 = 1.$ Then, according to the segmented property, for any $j$ $(2 \le j \le \ell + 1)$, the element $x_j$ can be placed only in sets whose minimal elements $p_j$ are greater than $x_{j - 1}$. So, for any $j$ $(2 \le j \le \ell + 1)$ we should put the element $x_j$ in any $t_{j - 1} - 2$ of $x_{j} -1- x_{j - 1}$ vacant sets, which gives $\binom{x_{j} - x_{j - 1} - 1}{t_{j - 1} - 2}$ ways.

Thus, for any fixed arrangement of minimal elements, we have 
$$
(x_1 - 1)^{a_1} \dots (x_{\ell}-1)^{a_{\ell}} 
\binom{x_2 - x_1 - 1}{t_1 - 2} \cdots 
\binom{m + 1 - x_{\ell} - 1}{t_{\ell} - 2}
$$
ways to form $(M+1)$-tuple $(\sigma_0, \ldots, \sigma_M)$. Note that it holds for the arbitrary sequence satisfying $2 \le x_1 < \dots < x_{\ell} \le m$. Therefore, we establish (\ref{sk}):
\begin{align*}
\ctirg{m}{m - \ell} &= \sum_{2 \le x_1 < \cdots < x_{\ell} \le m}(x_1 - 1)_1^{a_1} \dots (x_{\ell}-1)^{a_{\ell}} 
\binom{x_2 - x_1 - 1}{t_1 - 2} \cdots 
\binom{m + 1 - x_{\ell} - 1}{t_{\ell} - 2}.\\&= \sum_{1 \le i_1 < \cdots < i_{\ell} < m}i_1^{a_1} \dots i_{\ell}^{a_{\ell}} 
\binom{i_2 - i_1 - 1}{t_1 - 2} \cdots 
\binom{m - i_{\ell} - 1}{t_{\ell} - 2}.
\end{align*}
Iterative use of equation \eqref{5'} gives the same formula for $\B_{\mathbf{k}}(m)$:
$$
\B_{\mathbf{k}}(m) = \sum_{1 \le i_1 < \cdots < i_{\ell} < m}
i_1^{a_1} \dots i_{\ell}^{a_{\ell}} 
\binom{i_2 - i_1 - 1}{t_1 - 2} \cdots 
\binom{m - i_{\ell} - 1}{t_{\ell} - 2}.
$$
\end{proof}

{\it Remark.} A general case with the weight $(a_1, \ldots, a_{\ell}; t_1, \ldots, t_{\ell}, a)$ can be covered in a similar way. Combinatorial interpretation will be enriched by $\mathbf{k}$-partitions of $[n+1]$ having the property that the element $n+1$ is not a block or left-to-right minimum and $M = \max(a_1, \ldots, a_{\ell}, a)$. At the same time, the corresponding formulas 
$$B_{\mathbf{k}} = m^a B_{\mathbf{k} \setminus (k_{1}, \ldots, k_{n-a})} (m),\quad \B_{\mathbf{k}} = m^a \B_{\mathbf{k} \setminus (k_{1}, \ldots, k_{n-a})} (m)$$ hold for tuples $(k_1, \ldots, k_{n-a})$ with weight $(a_1, \ldots, a_{\ell}; t_1, \ldots, t_{\ell}, 0)$. 

\begin{crl}
If $\mathbf{k}$ has weight $(a_1, \ldots, a_{n-m}; t_1, \ldots, t_{n-m}; 0),$ then the following recurrence relations hold
$$
S_{(..., t_{n-m})}(n,m) =\sum_{i=0}^{t_{n-m} - 2}S_{(..., t_{n-m} - i)}(n-1,m-1)
+ m^{a_{n-m}} S_{(..., t_{n-m-1})}(n-1,m),
$$
$$
s_{(..., t_{n-m})}(n,m) =\sum_{i=0}^{t_{n-m} - 2}s_{(..., t_{n-m} - i)}(n-1,m-1)
+ (n-1)^{a_{n-m}} s_{(..., t_{n-m-1})}(n-1,m),
$$
or the following two types
$$
S_{(..., t_{n-m})}(n,m) = \sum_{i = 0}^{m} i^{a_{n-m}} \binom{t_{n-m} - 2 + m - i}{t_{n-m} - 2} S_{(..., t_{n-m-1})}(n-m-1+i,i),
$$
$$
s_{(..., t_{n-m})}(n,m) = \sum_{i = 0}^{m-1} i^{a_{n-m}} \binom{m - i - 1}{t_{n-m} - 2} s_{(..., t_{n-m-1})}(n-m-1+i,i).
$$
If we extend $m, n$ for all integers, then 
$$
\stirg{n}{m} = (-1)^{K} \ctirg{-m}{-n}.
$$
\end{crl}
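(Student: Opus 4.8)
The plan is to derive all four recurrences by specializing the identities of Theorem~\ref{recmain} to a tuple $\mathbf{k}$ of weight $(a_1,\ldots,a_{n-m};t_1,\ldots,t_{n-m};0)$, and to read off the reciprocity directly from the symmetry $B_{\mathbf{k}}(m)=(-1)^K\B_{\mathbf{k}}(-m)$. The essential bookkeeping, which I would fix at the outset, is the identification furnished by Theorem~\ref{spart}: here $\ell=n-m$, and $\stirg{\ell+p}{p}=B_{\mathbf{k}}(p)$, $\ctirg{p}{p-\ell}=\B_{\mathbf{k}}(p)$. Thus for the second kind the order-polynomial variable $p$ equals the block count, so $\stirg{n}{m}=B_{\mathbf{k}}(m)$, whereas for the first kind $p$ equals the ground-set size, so $\ctirg{n}{m}=\B_{\mathbf{k}}(n)$. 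This asymmetry is what makes the two kinds behave differently under the recurrences, and I would keep it in view throughout.

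For the two ``second type'' recurrences I would start from \eqref{5'}. Removing the last part $k_n=t_{n-m}$ replaces $\mathbf{k}$ by $\mathbf{k}\setminus k_n$, whose weight is $(a_1,\ldots,a_{n-m-1};t_1,\ldots,t_{n-m-1};a_{n-m}-1)$; by the Remark following Theorem~\ref{spart} one has $B_{\mathbf{k}\setminus k_n}(i)=i^{a_{n-m}-1}B_{\hat{\mathbf{k}}}(i)$ and $\B_{\mathbf{k}\setminus k_n}(i)=i^{a_{n-m}-1}\B_{\hat{\mathbf{k}}}(i)$, where $\hat{\mathbf{k}}$ carries the first $n-m-1$ series. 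Substituting this into \eqref{5'} absorbs the extra factor $i$ into $i^{a_{n-m}}$ and rewrites the summand through the smaller numbers with last end $t_{n-m-1}$; re-expressing in the $S,s$ notation then yields the binomial-convolution formulas. The one point requiring care is that the summation range and the top entry of the binomial are dictated by the correct order variable, which is $m$ in the $S$-case but $n$ in the $s$-case.

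The two ``first type'' recurrences I would obtain by iterating \eqref{main} and \eqref{main'} on the last end. Writing $\mathbf{k}^{(j)}$ for the tuple with last end lowered to $t_{n-m}-j$, repeated use of the $k_n>1$ branch steps $j$ from $0$ up to $t_{n-m}-2$; once the last end reaches $1$ the part merges into the trailing ones, and the $k_n=1$ branch together with the Remark produces the boundary term carrying the factor $m^{a_{n-m}}$ (respectively its first-kind analogue) and the reduced tuple with last end $t_{n-m-1}$. The main obstacle lies precisely here: for the second kind the recurrence \eqref{main} keeps the reduced tuple at the \emph{same} order value, so the iteration telescopes along a single row and every summand lands at $(n-1,m-1)$, exactly as stated; but for the first kind the recurrence \eqref{main'} evaluates the reduced tuple at order value one \emph{smaller}, so the telescoping must proceed along the diagonal, and the successive summands have to be tracked at arguments that descend in step with $j$ (with the boundary shifted accordingly). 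Pinning down these shifted arguments is the delicate part, and I would check the general form against the tabulated instance $s_{(1,3,1,4)}(6,4)=9$ before committing to it.

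Finally, the reciprocity is immediate from Theorem~\ref{recmain}. Fixing $\mathbf{k}$ of length $\ell=n-m$, the identifications $\stirg{n}{m}=B_{\mathbf{k}}(m)$ and $\ctirg{-m}{-n}=\B_{\mathbf{k}}(-m)$ (both having the correct length $\ell$) combine with $B_{\mathbf{k}}(m)=(-1)^{K}\B_{\mathbf{k}}(-m)$ to give $\stirg{n}{m}=(-1)^{K}\ctirg{-m}{-n}$ for the polynomial extensions in $m,n$; no further argument is required.
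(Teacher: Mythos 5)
Your plan is the paper's own (implicit) derivation: the corollary is stated without proof and is meant to fall out of \eqref{main}, \eqref{main'}, \eqref{5'}, Theorem \ref{spart} with $\ell=n-m$, and the Remark $\B_{\mathbf{k}}(m)=m^{a}\B_{(k_1,\ldots,k_{n-a})}(m)$, exactly as you set it up. Your dictionary ($\stirg{n}{m}=B_{\mathbf{k}}(m)$ but $\ctirg{n}{m}=\B_{\mathbf{k}}(n)$), the weight $(a_1,\ldots,a_{n-m-1};t_1,\ldots,t_{n-m-1};a_{n-m}-1)$ of $\mathbf{k}\setminus k_n$, the two $S$-recurrences, and the reciprocity are all handled correctly. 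The gap is precisely the step you defer --- and it cannot be closed as the statement stands, because both first-kind recurrences are misstated in the corollary. Completing your diagonal telescoping of \eqref{main'} (each application of $\B_{\mathbf{k}}(p)=\B_{\mathbf{k}}(p-1)+\B_{\mathbf{k'}}(p-1)$ lowers the evaluation point along with the last end) gives, writing $t=t_{n-m}$, $a=a_{n-m}$,
\begin{align*}
s_{(\ldots,t)}(n,m)&=\sum_{i=0}^{t-2}s_{(\ldots,t-i)}(n-1-i,\,m-1-i)+(n-t+1)^{a}\,s_{(\ldots,t_{n-m-1})}(n-t+1,\,m-t+2),
\end{align*}
which agrees with the printed row form (all summands at $(n-1,m-1)$) only when $t=2$ --- which is why the classical specialization hides the error. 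Your own proposed check decides it: for $\mathbf{k}=(1,3,1,4)$, $n=6$, $m=4$, the printed right-hand side is $0+9+105+25\cdot 20=614$, while the display above gives $0+0+0+3^{2}\cdot 1=9=s_{(1,3,1,4)}(6,4)$.

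The second-type $s$-formula fails for the same reason, and here your proposal overclaims: you correctly warn that the order variable is $n$ in the $s$-case, but then assert that the printed formula follows. Substituting $\B_{\mathbf{k}\setminus k_n}(p)=p^{a-1}\B_{\hat{\mathbf{k}}}(p)$ into \eqref{5'} at argument $n$ gives $s_{(\ldots,t)}(n,m)=\sum_{p}p^{a}\binom{n-p-1}{t-2}\,s_{(\ldots,t_{n-m-1})}(p,\,p-n+m+1)$, i.e., after reindexing $p=n-m-1+i$,
\begin{align*}
s_{(\ldots,t)}(n,m)&=\sum_{i=0}^{m}(n-m-1+i)^{a}\binom{m-i}{t-2}\,s_{(\ldots,t_{n-m-1})}(n-m-1+i,\,i),
\end{align*}
whereas the printed coefficient $i^{a}\binom{m-i-1}{t-2}$ is patterned on the $S$-case: on the same example it returns $0\ne 9$, and it is false already classically (for $\mathbf{k}=(2)$, $n=3$, $m=2$ one has $s_{(2)}(3,2)=\binom{3}{2}=3$ while the printed sum gives $1$). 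So your method is the intended one and your suspicion about the shifted arguments was exactly on target; but as written the attempt stops short of the decisive computation, and a complete proof must end by establishing these corrected $s$-identities rather than the two printed ones (the $S$-identities and the reciprocity hold verbatim, as you show).
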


If $k_1 = \cdots = k_n = 2,$ then we get the usual Stirling numbers:
$$B_{(2,\ldots, 2)}(m) = \stir{n + m}{m},\quad \B_{(2, \ldots, 2)}(m) = \ctir{m}{m - n}.$$

\begin{crl}
If $k_1 = \cdots = k_n = 3,$ then
$$B_{(3,\ldots, 3)}(m) = \stirr{n + m}{m},\quad \B_{(3, \ldots, 3)}(m) = \ctirr{m}{m - n},$$
where

\begin{enumerate}
\item $\stirr{n}{m}$ is a number of ordered pairs $(\pi_0, \pi_1)$ which satisfy the following properties:

\begin{enumerate}
\item $\pi_1$ is a partition of $[n]$ into $m$ blocks and $\pi_0$ is a segmented partition of a subset of $[n + 1]$ into $m$ blocks;

\item $\min(\pi_1) = \min(\pi_0)$;

\item if $x = \min([n] \setminus \min(\pi_1))$, then $x \sim_{\pi_0} 1$;
\end{enumerate}
\item $\ctirr{n}{m}$ is a number of ordered pairs $(\sigma_0, \sigma_1)$ which satisfy the following properties:
\begin{enumerate}
\item $\sigma_1$ is a permutation of $[n]$ having $m$ left-to-right minima and $\sigma_0$ is a segmented partition of $[n + 1]$ into $m$ blocks;

\item $\text{lmin}(\sigma_1) = \text{min}(\sigma_0)$;

\item if $x = \min([n] \setminus \text{lmin}(\sigma_1))$, then $x \sim_{\sigma_0} 1$.
\end{enumerate}
\end{enumerate}
\end{crl}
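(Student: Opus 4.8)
The plan is to obtain the corollary by specializing Theorem~\ref{spart} to the all-threes weight and then unwinding Definitions~\ref{smain} and~\ref{smain1}.

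First I would observe that for $\mathbf{k} = (3, \ldots, 3)$ every entry exceeds $1$, so each is a $1$-serie with end $3$; thus the length is $\ell = n$ and the weight is $(a_1, \ldots, a_n; t_1, \ldots, t_n; 0) = (1, \ldots, 1; 3, \ldots, 3; 0)$. Theorem~\ref{spart} then yields at once
$$B_{(3,\ldots,3)}(m) = \stirg{n+m}{m} = \stirr{n+m}{m}, \qquad \B_{(3,\ldots,3)}(m) = \ctirg{m}{m-n} = \ctirr{m}{m-n},$$
which are the two identities asserted in the statement. It remains only to verify that the $\mathbf{k}$-partition (resp. $\mathbf{k}$-permutation) systems counted by $\stirr{\cdot}{\cdot}$ (resp. $\ctirr{\cdot}{\cdot}$) collapse to the ordered pairs described in items (1) and (2).

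Next I would specialize the two definitions to $a_i = 1$ and $t_i = 3$ for all $i$. Because every $a_i = 1$, we have $M = \max(a_1, \ldots, a_{n-m}) = 1$, so each $(M+1)$-tuple is merely a pair $(\pi_0, \pi_1)$ (respectively $(\sigma_0, \sigma_1)$); moreover property~(iii) in each definition constrains the objects $\pi_j$ (resp. $\sigma_j$) only for indices $j > a_i = 1$, and since no such index satisfies $j \le M = 1$, property~(iii) becomes vacuous. Because every $t_i = 3$, each multiplicity $t_i - 2$ equals $1$, so the bounding multiset appearing in property~(iv), namely $\min(\pi_1) \uplus \{x_1, x_2^{t_1-2}, \ldots, (n+1)^{t_{n-m}-2}\}$, reduces to $\min(\pi_1) \uplus \{x_1, \ldots, x_{n-m}, n+1\} = [n+1]$ with each element of multiplicity one. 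Hence $\pi_0$ (resp. $\sigma_0$) has pairwise distinct elements, i.e. is an honest segmented set partition whose blocks are disjoint; since $\min(\pi_0) = \min(\pi_1)$ and $|\min(\pi_1)| = m$, it has exactly $m$ blocks. For the partition system the containment ``$\subseteq$'' makes $\pi_0$ a partition of a \emph{subset} of $[n+1]$, whereas for the permutation system the equality ``$=$'' makes $\sigma_0$ a partition of all of $[n+1]$.

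Finally I would match the surviving conditions against the corollary. Properties~(i)--(ii) say that $\pi_1$ partitions $[n]$ into $m$ blocks and that $\sigma_1$ is a permutation of $[n]$ with $m$ left-to-right minima, giving item~(a) in each case; the requirement $\min(\pi_0) = \min(\pi_1)$ (resp. $\min(\sigma_0) = \mathrm{lmin}(\sigma_1)$) is item~(b); and since $x_1 = \min([n] \setminus \min(\pi_1))$, the relation $x_1 \sim_{\pi_0} 1$ is precisely item~(c). The only delicate bookkeeping is keeping track of the ``$\subseteq$'' in Definition~\ref{smain}(iv) versus the ``$=$'' in Definition~\ref{smain1}(iv), which is exactly what distinguishes ``subset of $[n+1]$'' in item~(1) from ``$[n+1]$'' in item~(2); beyond this substitution I anticipate no genuine obstacle, since the corollary is a direct reading of Theorem~\ref{spart} in the collapsed case $M = 1$, $t_i - 2 = 1$.
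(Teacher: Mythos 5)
Your proposal is correct and follows the paper's intended route: the paper states this corollary without a separate proof precisely because it is the specialization of Theorem~\ref{spart} and Definitions~\ref{smain}, \ref{smain1} to weight $(1,\ldots,1;3,\ldots,3;0)$, where $M=1$ collapses the systems to pairs and all multiplicities $t_i-2$ equal $1$, which is exactly the reduction you carry out. Your handling of the two delicate points --- the vacuity of property (iii) when $M=1$, and the ``$\subseteq$'' versus ``$=$'' in property (iv) that distinguishes ``subset of $[n+1]$'' from ``$[n+1]$'' --- is accurate.
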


\begin{crl}
For $k_1 = \cdots = k_n = k > 2,$ we have 
$$B_{(k, \ldots, k)}(m) = \stirrr{n + m}{m}, \quad \B_{(k, \ldots, k)}(m) = \ctirrr{m}{m - n},$$
where
\begin{enumerate}
\item $\stirrr{n}{m}$ is a number of ordered pairs $(\pi_0, \pi_1)$ which satisfy the following properties:
\begin{enumerate}
\item $\pi_1$ is a partition of $[n]$ into $m$ blocks;

\item $\pi_0$ is a segmented partition of the multi-subset of $\{1^{k - 2}, \ldots, (n+1)^{k - 2} \}$ into $m$ (multiset) blocks so that any block contains one copy of its minimal element;

\item $\min(\pi_1) = \min(\pi_0)$;

\item if $x = \min([n] \setminus \min(\pi_1))$, then $x^{(k -2)} \sim_{\pi_0} 1$.
\end{enumerate}

\item $\ctirrr{n}{m}$ is a number of ordered pairs $(\sigma_0, \sigma_1)$ which satisfy the following properties:
\begin{enumerate}
\item $\sigma_1$ is a permutation of $[n]$ having $m$ left-to-right minima;

\item $\sigma_0$ is a segmented partition of the multiset $\{1^{k - 2}, \ldots, (n+1)^{k - 2} \}$ into $m$ multiset blocks so that any block contains one copy of its minimal element;

\item $\text{lmin}(\sigma_1) = \text{min}(\sigma_0)$;

\item if $x = \min([n] \setminus \text{lmin}(\sigma_1))$, then $x^{(k -2)} \sim_{\sigma_0} 1$.
\end{enumerate}
\end{enumerate}
\end{crl}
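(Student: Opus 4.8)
The plan is to deduce this corollary by specializing Theorem~\ref{spart} and Definitions~\ref{smain}, \ref{smain1} to the constant tuple $\mathbf{k}=(k,\ldots,k)$. First I would record its weight: since every entry satisfies $k_i=k>1$, each component is a single $1$-serie with end $k$, so the length is $\ell=n$ and the weight is $(1,\ldots,1;\,k,\ldots,k;\,0)$, with all $a_i=1$, all $t_i=k$, and $a=0$; in particular $M=\max(a_1,\ldots,a_n)=1$. Writing $\stirrr{\cdot}{\cdot}$ and $\ctirrr{\cdot}{\cdot}$ for $\stirg{\cdot}{\cdot}$ and $\ctirg{\cdot}{\cdot}$ of this tuple, Theorem~\ref{spart} delivers the two identities $B_{(k,\ldots,k)}(m)=\stirrr{n+m}{m}$ and $\B_{(k,\ldots,k)}(m)=\ctirrr{m}{m-n}$ immediately, since $\ell=n$. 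It remains to check that the $\mathbf{k}$-partition systems counted by $\stirrr{n}{m}$, and the $\mathbf{k}$-permutation systems counted by $\ctirrr{n}{m}$, are exactly the pairs $(\pi_0,\pi_1)$ and $(\sigma_0,\sigma_1)$ of the statement.

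Next I would specialize Definition~\ref{smain} on the ground set $[n]$ with $M=1$, $a_i=1$, and $t_i=k$. As $M=1$, a $\mathbf{k}$-partition system is merely a pair $(\pi_0,\pi_1)$, which fixes the shape of the objects. Properties (i)--(ii) say precisely that $\pi_1$ is a partition of $[n]$ into $m$ blocks, which is (1a). Property (iii) concerns indices $j$ with $j>a_i=1$; as the only partition present is $\pi_1$ (i.e. $j\le M=1$), no such $j$ exists and (iii) is vacuous, so $\pi_1$ ranges over all such partitions. Property (iv) with $t_i=k$ says that $\pi_0$ is a segmented family with $\min(\pi_0)=\min(\pi_1)$, with $x_1=\min([n]\setminus\min(\pi_1))$ in the block of $1$, and with $\mathrm{multiset}(\pi_0)\subseteq\min(\pi_1)\uplus\{x_1,x_2^{k-2},\ldots,x_{n-m}^{k-2},(n+1)^{k-2}\}$; this is (1b)--(1d). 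The identical specialization of Definition~\ref{smain1} turns $(\sigma_0,\sigma_1)$ into an arbitrary permutation $\sigma_1$ of $[n]$ with $m$ left-to-right minima together with the segmented family $\sigma_0$, which is part (2).

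The step that needs care is reconciling property (iv) with the phrasing in (1b)--(1d): the statement draws $\pi_0$ from the uniform pool $\{1^{k-2},\ldots,(n+1)^{k-2}\}$, keeping one copy of each block minimum and placing all $k-2$ copies of $x$ with $1$, whereas the specialized Definition~\ref{smain} keeps just one copy of $x_1$ beside $1$ while distributing at most $k-2$ copies of each of $x_2,\ldots,x_{n-m},n+1$. These enumerate the same families, because in either version the block of $1$ is entirely forced: if some $y>x_1$ were placed with $1$, then since $1<x_1<y$ the segmented condition would force $x_1\in\min(\pi_0)=\min(\pi_1)$, contradicting that $x_1$ is non-minimal; hence the block of $1$ consists of $1$ and the prescribed copies of $x_1$ only, contributing a factor $1$ whether that number is $1$ or $k-2$. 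As the segmented condition constrains every other element only against the fixed set $\min(\pi_1)$ of block minima, the placements of $x_2,\ldots,n+1$ are untouched, and the two descriptions are equinumerous; the same argument applies to $\sigma_0$. I would then make the count explicit---each $x_j$ with $j\ge2$ may be spread, in at most $k-2$ copies, over the blocks whose minima lie strictly between $x_{j-1}$ and $x_j$ together with a discard option---thereby recovering the binomial factors of \eqref{Sk} and \eqref{sk}. This bookkeeping is the only real obstacle; once it is in place, equations \eqref{Sk} and \eqref{sk} of Theorem~\ref{spart} finish the proof.
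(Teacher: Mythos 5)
Your overall strategy is the intended one: the paper offers no separate proof of this corollary, stating it immediately after Theorem~\ref{spart} as the specialization you carry out. Your weight computation is right ($\ell=n$, all $a_i=1$, all $t_i=k$, $a=0$, $M=1$), condition (iii) of Definitions~\ref{smain}, \ref{smain1} is indeed vacuous, and your reconciliation on the second-kind side is correct: by segmentedness the block of $1$ in $\pi_0$ is forced (any $y>x_1$ with $1\sim_{\pi_0}y$ would force $x_1\in\min(\pi_0)$), so whether one copy of $x_1$ (Definition~\ref{smain}) or $k-2$ copies (the corollary) accompany $1$ contributes a factor of $1$ either way, and the remaining placements coincide.

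The gap is on the first-kind side, which you dismiss as ``the identical specialization'' to which ``the same argument applies.'' Definition~\ref{smain1} is \emph{not} structurally identical to Definition~\ref{smain}: it demands $\mathrm{multiset}(\sigma_0)=\mathrm{lmin}(\sigma_1)\uplus\{x_1,x_2^{k-2},\ldots,(n+1)^{k-2}\}$ with \emph{equality} (no copy may be discarded), and its blocks are \emph{sets}, so the $k-2$ copies of each $x_j$ ($j\ge 2$) must land in $k-2$ distinct blocks. Your closing bookkeeping rule --- ``each $x_j$ may be spread, in at most $k-2$ copies, \ldots together with a discard option'' --- is the rule for the second kind only; it produces the multichoose factors of \eqref{Sk}, not the factors $\binom{i_{j+1}-i_j-1}{t_j-2}$ of \eqref{sk}, which count choices of $t_j-2$ \emph{distinct} blocks with \emph{no} discard. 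Concretely, take $k=4$, $n=1$, $m=4$: the corollary asserts $s_4(4,3)=\B_{(4)}(4)=\binom{4}{4}=1$, realized by the unique pair $\sigma_1=(4,3,1,2)$, $\sigma_0=\{1,2\},\{3,5\},\{4,5\}$ (both copies of $5$ placed, in distinct blocks). Under your rule one would also count, e.g., $\sigma_1=(3,2,1,4)$ with $\sigma_0=\{1,4\},\{2\},\{3\}$ (copies of $5$ discarded) and $\sigma_1=(4,3,1,2)$ with $\sigma_0=\{1,2\},\{3\},\{4,5,5\}$ (repetition inside a block), giving a total well above $1$. The same care is needed to read the corollary's own clause (2b): only the block of $1$ is a genuine multiset there; every other element's copies must all be used and must occupy pairwise distinct blocks. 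So the two kinds must be treated asymmetrically, exactly as in the proof of Theorem~\ref{spart}; with that correction your argument goes through.
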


\begin{crl}\label{cat}
Suppose that $\mathbf{k}$ has weight $(a, \ldots, a; t, \ldots, t; 0)$ and length $n$. Then 
$$
B_{\mathbf{k}}(m) = S_{a,t}(n + m, m), \quad \B_{\mathbf{k}}(m) = s_{a,t}(m, m - n),
$$
where 

\begin{enumerate}
\item $S_{a,t}(n,m)$ is a number of ordered $(a+1)$-tuples $(\pi_0, \pi_1, \ldots, \pi_a)$ which satisfy the following properties:
\begin{enumerate}
\item $\pi_1, \ldots, \pi_a$ are partitions of $[n]$ into $m$ blocks;

\item $\pi_0$ is a segmented partition of the multi-subset of $\{1^{t - 2}, \ldots, (n+1)^{t - 2} \}$ into $m$ (multiset) blocks so that any block contains one copy of its minimal element;

\item $\min(\pi_1) = \min(\pi_0)$;

\item if $x = \min([n] \setminus \min(\pi_1))$, then $x^{(t -2)} \sim_{\pi_0} 1$.
\end{enumerate}

\item $s_{a,t}(n,m)$ is a number of ordered $(a+1)$-tuples $(\sigma_0, \sigma_1, \ldots, \sigma_a)$ which satisfy the following properties:
\begin{enumerate}
\item $\sigma_1, \ldots, \sigma_a$ are permutations of $[n]$ having $m$ left-to-right minima;

\item $\sigma_0$ is a segmented partition of the multiset $\{1^{t - 2}, \ldots, (n+1)^{t - 2} \}$ into $m$ multiset blocks so that any block contains one copy of its minimal element;

\item $\text{lmin}(\sigma_1) = \text{min}(\sigma_0)$;

\item if $x = \min([n] \setminus \text{lmin}(\sigma_1))$, then $x^{(t -2)} \sim_{\sigma_0} 1$.
\end{enumerate}
\end{enumerate}
\end{crl}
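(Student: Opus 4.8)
The plan is to derive Corollary \ref{cat} as the \emph{homogeneous specialization} of Theorem \ref{spart}, obtained by setting $a_1 = \cdots = a_{\ell} = a$ and $t_1 = \cdots = t_{\ell} = t$, where $\ell$ is the length $n$ of $\mathbf{k}$. Under this substitution Theorem \ref{spart} already delivers $B_{\mathbf{k}}(m) = \stirg{n+m}{m}$ and $\B_{\mathbf{k}}(m) = \ctirg{m}{m-n}$, so I would simply rename the two counting functions $\stirg{\cdot}{\cdot}$ and $\ctirg{\cdot}{\cdot}$ as $S_{a,t}$ and $s_{a,t}$ once I have checked that their defining combinatorial objects collapse to the $(a+1)$-tuples described in the statement. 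At the level of explicit sums this identification is immediate: formulas \eqref{Sk} and \eqref{sk} become
$$\sum_{1 \le i_1 \le \cdots \le i_n \le m} (i_1 \cdots i_n)^a \binom{t + i_2 - i_1 - 2}{i_2 - i_1} \cdots \binom{t + m - i_n - 2}{m - i_n}$$
and its strict analogue, so the numerical identities $B_{\mathbf{k}}(m) = S_{a,t}(n+m,m)$ and $\B_{\mathbf{k}}(m) = s_{a,t}(m,m-n)$ hold by inspection.

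The substance is to verify that the $\mathbf{k}$-partition system of Definition \ref{smain} turns into exactly the object of part (1). Here $M = \max(a_1,\dots,a_{\ell}) = a$, and since every $a_i$ equals $a$, condition (iii) — which constrains only those $\pi_j$ with index $j > a_i$ — becomes vacuous, as no such index survives among $\pi_1, \ldots, \pi_a$. Thus $\pi_1, \ldots, \pi_a$ are merely partitions of $[n]$ into $m$ blocks sharing a common set of $m$ minima, which is (1)(a) together with $\min(\pi_1) = \min(\pi_0)$ from (1)(c). It then remains to recast condition (iv): with every $t_i = t$, the bounding multiset $\min(\pi_1) \uplus \{x_1, x_2^{t-2}, \ldots, (n+1)^{t-2}\}$ is to be rephrased as ``$\pi_0$ is a segmented partition of a multi-subset of $\{1^{t-2}, \ldots, (n+1)^{t-2}\}$ in which each block carries one copy of its minimum'', and the forced incidence $x_1 \sim_{\pi_0} 1$ becomes (1)(d). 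I would run the permutation case (part (2)) in exactly the same way from Definition \ref{smain1}, and I would use the reciprocity $\stirg{n}{m} = (-1)^{K} \ctirg{-m}{-n}$ only as a consistency check tying the two halves together, since each half already follows from its own specialized model.

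The step I expect to require the most care is this rephrasing of condition (iv) for $\pi_0$ and $\sigma_0$. The general definition records the extra copies of the non-minimal elements with a one-step index shift — it assigns multiplicity $t_j - 2$ to $x_{j+1}$ and treats the distinguished element $x = \min([n]\setminus\min(\pi_1))$ and the block minima as single copies — whereas the homogeneous statement distributes the multiplicities uniformly over $1, \ldots, n+1$ subject to each block minimum occurring once. Confirming that these two bookkeeping conventions describe the same family of segmented multiset partitions, in particular matching how the distinguished $x$ and the block minima are counted, is the one place where a short explicit bijection (or a term-by-term recount reproducing each factor $\binom{t + i_{j+1} - i_j - 2}{i_{j+1} - i_j}$) is needed; everything else reduces to direct substitution into Theorem \ref{spart}.
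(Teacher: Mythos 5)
Your proposal is correct and matches the paper's treatment: the paper states Corollary~\ref{cat} without separate proof, precisely because it is the homogeneous specialization $a_1=\cdots=a_\ell=a$, $t_1=\cdots=t_\ell=t$ of Theorem~\ref{spart} and of Definitions~\ref{smain}--\ref{smain1}, which is exactly the route you take (including the key observation that condition (iii) becomes vacuous when every $a_i$ equals $M=a$). The one step you defer --- reconciling the multiplicity bookkeeping for $\pi_0,\sigma_0$ (the single copy of $x_1$ versus $x^{(t-2)}$, and block minima counted once) --- is indeed the only point needing care, and it goes through since those placements are forced and carry no choice, so both conventions reproduce the same factors $\binom{t+i_{j+1}-i_j-2}{i_{j+1}-i_j}$.
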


\section{Stirling numbers of odd type}\label{stirodd}

Let us define the \textit{Stirling numbers of odd type} of second and first kinds $S_{\text{odd}}(n,k)$ and $s_{\text{odd}}(n,k)$ by the following  recurrence relations
$$
S_{\text{odd}}(n, k) = S_{\text{odd}}(n-1, k-1) + k S_{\text{odd}}(n-1, k) + \delta_{n,k},
$$
$$
s_{\text{odd}}(n, k) = s_{\text{odd}}(n-1, k-1) + (n-1) s_{\text{odd}}(n-1, k) + \delta_{n,k},
$$
where $S_{\text{odd}}(0, 0) = s_{\text{odd}}(0, 0) = 0, S_{\text{odd}}(n, k) = s_{\text{odd}}(n, k)= 0$, if $n < k$;
$\delta_{n, k}$ is a Kronecker delta, $\delta_{n,k} = 1$ if $n = k$ and $\delta_{n,k} = 0$, otherwise. Note that $S_{\text{odd}}(n, n) = s_{\text{odd}}(n, n) = n$.

Now we introduce the notion of partitions and permutations with leader. Let 
 $\pi=\{B_1, \ldots, B_k\}$ be a partition of $[n]$ into $k$ blocks such that $\min(B_1) < \cdots < \min(B_k)$. 
Say that $(\ell,\pi)$ is {\it a partition with leader} $\ell$ if $\min(B_{\ell})=\ell.$ Similarly, for a cyclic presentation of a permutation $\sigma\in S_n$ 
as a product of cycles $\sigma=\sigma^{(1)}\cdots \sigma^{(k)},$ where $\min(\sigma^{(1)})<\cdots <\min (\sigma^{(k)}),$ 
say that $(\ell,\sigma)$ is {\it a permutation with leader}  $\ell,$ if $\min(\sigma^{(i)})=\ell.$ 
For example, for the partition $\pi=\{\{1,3\},\{2\},\{4\}\}$ the pairs $(1,\pi)$ and $(2,\pi)$ are partitions with leader.
Example for permutation: if $\sigma=14852763,$ then $\sigma=(1)(245)(38)(67)$ and $(1,\sigma), (2,\sigma),(3,\sigma)$ are permutations with leader.  

\begin{thm} 
The following properties hold for $S_{\rm{odd}}(n,k)$, $s_{\rm{odd}}(n,k)$:
\begin{itemize}

\item $S_{\rm{odd}}(n, k)$ is equal to the number of partitions with leader of $[n]$ into $k$ blocks.

\item $s_{\rm{odd}}(n, k)$ is equal to the number of permutations with leader of $[n]$ with $k$ cycles. 

\item If $k_0 = 1$ and $k_1 = \cdots = k_n = 2$, i.e., $\mathbf{n} = \{0^1, 1^2, \ldots, n^2 \}$, then 
$$B_{\mathbf{k}}(m) = S_{\rm{odd}}(n + m, m), \quad \B_{\mathbf{k}}(m) = s_{\rm{odd}}(m, m - n).$$ 
\item 
\begin{equation}\label{oddmul}
S_{\rm{odd}}(n+m, m) = \sum_{1 \le i_1 \le \cdots \le i_{n} \le m} i_1^2 i_2 \cdots i_{n},
\quad
s_{\rm{odd}}(m, m -n) = \sum_{1 \le i_1 < \cdots < i_{n} < m} i_1^2 i_2 \cdots i_{n}.
\end{equation}
\end{itemize}
\end{thm}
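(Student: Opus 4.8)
The plan is to prove the four bullet points of the final theorem in a way that mirrors the development already in the paper, reducing the combinatorial claims to verifications of recurrences and to specializations of results proved earlier (especially Theorem~\ref{spart}).

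\textbf{The two combinatorial interpretations.} First I would establish that the number of partitions with leader of $[n]$ into $k$ blocks satisfies the defining recurrence for $S_{\rm odd}(n,k)$, and dually for permutations. Write $L(n,k)$ for the number of partitions with leader of $[n]$ into $k$ blocks. To set up the recurrence I would condition on the element $n$. In a partition $\pi=\{B_1,\ldots,B_k\}$ with $\min(B_1)<\cdots<\min(B_k)$ and designated leader $\ell$ (meaning $\min(B_\ell)=\ell$), removing $n$ from $[n]$ produces a partition of $[n-1]$. If $n$ is a singleton block, then it must be the block $B_k$ (since it has the largest minimum), deletion lowers the block count to $k-1$, and the leader index is preserved; this contributes $L(n-1,k-1)$. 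If $n$ lies in a block of size $\ge 2$, deletion keeps $k$ blocks and the minima unchanged, so the leader is unaffected; the element $n$ could have been inserted into any of the $k$ blocks, giving $kL(n-1,k)$. The only configuration unaccounted for is where every block is forced, which is exactly the case $n=k$ (all blocks singletons), where the number of valid leaders is $n$; this is the source of the $\delta_{n,k}$ correction once one checks the boundary value $L(n,n)=n$. Matching this to the recurrence for $S_{\rm odd}$ and checking $L(n,n)=n=S_{\rm odd}(n,n)$ completes the first bullet. The permutation statement is analogous, replacing ``block'' by ``cycle'' and using that a new maximal element $n$ either forms its own fixed-point cycle (giving $s_{\rm odd}(n-1,k-1)$) or is inserted immediately after any of the $n-1$ existing elements in the cycle notation (giving $(n-1)s_{\rm odd}(n-1,k)$), with the same $\delta_{n,k}$ correction at $n=k$.

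\textbf{Identification with Stirling polynomials.} For the third bullet, the multiset $\mathbf{n}=\{0^1,1^2,\ldots,n^2\}$ has $\mathbf{k}=(1,2,\ldots,2)$, so its weight is $(a_1;t_1;a)=(2,\ldots)$—more precisely, the leading $1$ merges with the first $2$ to form a single $2$-series with end $t_1=2$, and all subsequent entries are further $2$-series, giving weight $(a_1,\ldots,a_n;t_1,\ldots,t_n;0)=(2,1,\ldots,1;2,2,\ldots,2;0)$ and length $\ell=n$. I would feed this weight into Theorem~\ref{spart}: formula \eqref{Sk} with all $t_i=2$ makes every binomial coefficient $\binom{t_i+\cdots}{\cdots}=\binom{i_{j+1}-i_j}{i_{j+1}-i_j}=1$, and the exponents $a_1=2,a_2=\cdots=a_\ell=1$ turn the sum into exactly $\sum_{1\le i_1\le\cdots\le i_n\le m} i_1^2 i_2\cdots i_n$; dually \eqref{sk} collapses to $\sum_{1\le i_1<\cdots<i_n<m} i_1^2 i_2\cdots i_n$. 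This simultaneously proves the fourth bullet \eqref{oddmul} and, by comparing with $B_{\mathbf{k}}(m)=\stirg{\ell+m}{m}$ and $\B_{\mathbf{k}}(m)=\ctirg{m}{m-\ell}$ from Theorem~\ref{spart}, shows that $B_{\mathbf{k}}(m)$ and $\B_{\mathbf{k}}(m)$ are given by these closed forms. It then remains to verify that these same sums satisfy the $S_{\rm odd}$ and $s_{\rm odd}$ recurrences, i.e. that the general recurrences in the Corollary following Theorem~\ref{spart} specialize, under $a_{n-m}=1$ (resp.\ the first exponent $2$) and $t=2$, to the odd-type recurrences with the $\delta_{n,k}$ term; the Kronecker delta arises precisely from the diagonal boundary term that the polynomial identity contributes at $n=k$.

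\textbf{Main obstacle.} The routine parts are the binomial collapses and the reindexing $x_j\mapsto i_j$. The delicate point—and the one I would write out most carefully—is the appearance of the additive $\delta_{n,k}$ in the odd-type recurrences, which is \emph{not} present in the ordinary Stirling recurrences. This term reflects the extra freedom in choosing the leader $\ell$ precisely on the diagonal $n=k$ (where all blocks are singletons and there is no ``large'' block to absorb $n$), and correspondingly the boundary value $S_{\rm odd}(n,n)=s_{\rm odd}(n,n)=n$ rather than $1$. I would therefore verify both the combinatorial recurrences and the polynomial recurrences inherited from \eqref{main}, \eqref{main'} at the diagonal separately, confirming that the asymmetric role of the single element $0$ (the component with $k_0=1$) is exactly what produces the squared factor $i_1^2$ in \eqref{oddmul} and the additive correction in the recurrence. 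Once the two recurrences and the shared initial conditions agree, uniqueness of the solution forces $S_{\rm odd}(n+m,m)=B_{\mathbf k}(m)$ and $s_{\rm odd}(m,m-n)=\B_{\mathbf k}(m)$, completing all four bullets.
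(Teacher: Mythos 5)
Your proposal is correct and takes essentially the same approach as the paper: the leader interpretations are proved by the identical conditioning-on-$n$ recurrence (with the diagonal value $L(n,n)=n$ accounting for the Kronecker delta), and the link to $B_{\mathbf{k}}(m)$, $\B_{\mathbf{k}}(m)$ uses the same two ingredients, namely the recurrences \eqref{main}, \eqref{main'} and the specialization of Theorem~\ref{spart} to the weight $(2,1,\ldots,1;2,\ldots,2;0)$. The only difference is the order of deduction — the paper first matches $B_{\mathbf{k}}(m)$ with $S_{\text{odd}}(n+m,m)$ via \eqref{main}, \eqref{main'} and then reads off \eqref{oddmul} from \eqref{Sk}, \eqref{sk}, whereas you derive the closed-form sums first and then match recurrences on them — which is an immaterial reordering of the same argument.
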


\begin{proof}
Let $S'(n,k)$ (resp. $s'(n,k)$) be the number of partitions (resp. permutations) with leader of $[n]$ into $k$ blocks (resp. cycles). 
If $n = k$, then there are $n$ partitions (resp. permutations) with leader and $S'(n, n) = s'(n, n) = n.$
Otherwise, if $n > k$, then $n$ is not a leader. 
If $n$ forms one separate block (resp. cycle), then there are $S'(n - 1, k - 1)$ (resp. $s'(n - 1, k - 1)$) partitions (resp. permutations) with leader.  
If $n$ belongs to other blocks (resp. cycles), there are $kS'(n -1, k)$ (resp. $(n-1)s'(n - 1, k)$) partitions (resp. permutations) with leader. Therefore, 
$$
S'(n,k) = S'(n-1,k-1) + kS'(n-1,k), \text{ if } n > k \text{ and } S'(n,n) = n,
$$
$$
s'(n,k) = s'(n-1,k-1) + (n-1)s'(n-1,k), \text{ if } n > k \text{ and } s'(n,n) = n.
$$
So,
$$
S_{\rm{odd}}(n,k) = S'(n,k), \quad s_{\rm{odd}}(n,k)= s'(n,k).
$$
If $n=0,$ then $B_{(0)}(m) = \B_{(0)}(m)= S_{\rm{odd}}(m, m) = s_{\rm{odd}}(m, m) = m.$ If $n \ge 1,$ then by the recurrence relations \eqref{main}, \eqref{main'}, 
$$B_{\mathbf{k}}(m) = B_{\mathbf{k}}(m-1) + mB_{\mathbf{k} \setminus 2}(m),$$ $$\B_{\mathbf{k}}(m) = \B_{\mathbf{k}}(m-1) + (m-1)\B_{\mathbf{k} \setminus 2}(m - 1),$$ which is the same as the recurrences $$S_{\rm{odd}}(n+m, m) = S_{\rm{odd}}(n+m - 1, m-1) + m S_{\rm{odd}}(n+m -1, m),$$ $$s_{\rm{odd}}(m, m - n) = s_{\rm{odd}}(m - 1, m-n-1) + (m-1) s_{\rm{odd}}(m -1, m - n).$$ Therefore,
$$
B_{\mathbf{k}}(m) = S_{\rm{odd}}(n + m, m), \quad \B_{\mathbf{k}}(m) = s_{\rm{odd}}(m, m - n).
$$ 

The formulas \eqref{oddmul} are consequences of Theorem \ref{spart} and \eqref{Sk}, \eqref{sk}.
\end{proof}

{\it Remark.} These combinatorial interpretations of $S_{\text{odd}}(n, k), s_{\text{odd}}(n,k)$ are a partial case of our general model with the weight $(2,1,\ldots, 1; 2, \ldots, 2; 0)$. In that case the $\mathbf{k}$-partition (resp. $\mathbf{k}$-permutation) system consists of two partitions (resp. permutations) $(\pi_1, \pi_2)$ (resp. $(\sigma_1, \sigma_2)$) differing only in one element that can be used as a leader. 

\

The usual Stirling numbers correspond to the case $k_1 = \cdots = k_n = 2$ and they can be defined as \textit{Stirling numbers of even type}. The numbers $S_{\text{odd}}(n, k)$ in  Knuth \cite{knuth-opt} denoted as half-integer Stirling numbers.

We may also note that the presented combinatorial meanings of $S_{\text{odd}}(n, k), s_{\text{odd}}(n,k)$ clearly imply their connection with the $r$-Stirling numbers introduced by Broder \cite{broder}. Namely, 
$$
S_{\text{odd}}(n, k) = \sstir{n}{k}_1 + \sstir{n}{k}_2 + \cdots + \sstir{n}{k}_k, \quad s_{\text{odd}}(n, k) = \cstir{n}{k}_1 + \cstir{n}{k}_2 + \cdots + \cstir{n}{k}_k,
$$
where $\cstir{n}{k}_r, \sstir{n}{k}_r$ are $r$-Stirling numbers of the first and second kinds, which count the number of permutations (resp. partitions) of $[n]$ with $k$ cycles (resp. blocks) such that the numbers $1, \ldots, r$ are in distinct cycles (resp. blocks). 

\section{Generalization of central factorial numbers} \label{gencent}
Let  $S_t(n,k),s_t(n,k)$ be the numbers  defined by the following relations 
\begin{align*}
x^n = \sum_{k = 0}^{n} S_t(n, k) \prod_{i = 0}^{k-1} (x-i^t),
\text{\qquad }
\prod_{i = 0}^{n-1} (x+i^t) = \sum_{k = 0}^{n} s_t(n, k) x^k.
\end{align*}
They satisfy the following  recurrence relations
\begin{align*}
S_t(n, k) &= S_t(n-1, k-1) + k^t S_t(n-1,k), \quad S_t(0, 0) = 1, S_t(n,k) = 0 \text{ if } n < k;\\
s_t(n, k) &= s_t(n-1, k-1) + (n-1)^t s_t(n-1,k), \quad s_t(0, 0) = 1, s_t(n,k) = 0 \text{ if } n < k.\end{align*}

If $t = 1$, then $S_1(n, k), s_1(n, k)$ became the usual Stirling numbers of the second and first kind, respectively. If $t = 2$, then $S_2(n, k), s_2(n, k)$ refer to the central factorial numbers $T(2n, 2k), t(2n,2k)$ (Riordan \cite{riordan}) defined by
$$
x^n = \sum_{k = 0}^n T(n,k) x \prod_{i = 1}^{k - 1} (x + k/2 - i), \quad x \prod_{i = 1}^{n-1} (x + n/2 - i) = \sum_{k = 0}^n t(n,k) x^k.
$$
The numbers $S_t(n,k), s_t(n,k)$ are partial cases of $\stirg{n}{k}, \ctirg{n}{k}$ defined in Section \ref{str}, if ${\bf k}$ consists of several repetitions of $t$-series with ends $2.$   

Denote by $\mathbf{t}n $ the $nt$-tuple defined by 
$$
\mathbf{t}n = (k_1, \ldots, k_{nt}) = (\underbrace{\overbrace{1,1,\dots, 1,2}^{t \text{ numbers }}, \overbrace{1,1,\dots, 1,2}^{t \text{ numbers }}, \dots, \overbrace{1,1,\dots, 1,2}^{t \text{ numbers }}}_{n \text{ blocks by $t$ numbers}}).
$$
The multiset of type ${\mathbf{t}}n$  has length $n$ and weight $(t, \ldots, t; 2, \ldots, 2; 0)$.
For example, if $t = 3, n = 2,$ then $(k_1, \ldots, k_6) = (1, 1, 2, 1, 1, 2)$ and the corresponding multiset is $\mathbf{nt} = \{1^{k_1}, \ldots, 6^{k_6} \} = \{1, 2, 3, 3, 4, 5, 6, 6 \}.$ 

The $\mathbf{t}n$-Stirling poset $P_{{\bf t}n}$ 

\noindent\begin{center}
\begin{tabular}{c}
\includegraphics{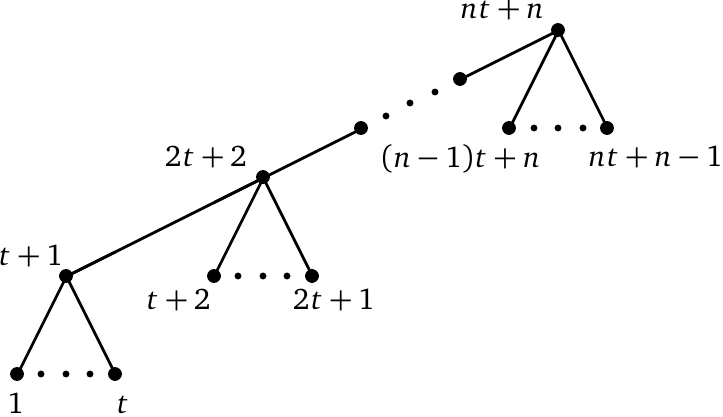}
\end{tabular}

{\bf Fig. 4.} The $\mathbf{t}n$-Stirling poset $P_{{\bf t}n}$.
\end{center}

satisfies the following relations
$$
\Omega(P_{{\bf t}n}, m) = S_t(n + m, m), \quad \overline\Omega(P_{{\bf t}n}, m) = s_t(m, m - n).
$$

\begin{thm} The following properties hold for $S_t(n,k), s_t(n,k)$:
\begin{itemize}
\item 
$S_t(n,k)$ is a number of ordered $t$-tuples $(\pi_1, \ldots,\pi_t),$ where $\pi_1, \ldots,\pi_t$ are partitions of $[n]$ into $k$ blocks such that  $\min(\pi_1) = \cdots = \min(\pi_t)$.

\item 
$s_t(n,k)$ is a number of ordered $t$-tuples $(\sigma_1, \ldots,\sigma_t),$ where $\sigma_1,\ldots,\sigma_t$  are permutations of $[n]$ that have $k$ cycles, such that  $\min(\sigma_1) = \cdots = \min(\sigma_t).$ 

\item $B_{\mathbf{t}n}(m) = S_{t}(n+m,m)$ and $\B_{\mathbf{t}n}(m) = s_{t}(m,m - n).$
\item 
\begin{equation}\label{tmul}
S_t(n + k, k) = \sum_{1 \le i_1 \le \cdots \le i_{k} \le n} i_1^t \cdots i_k^t, \qquad
s_t(n, n - k) = \sum_{1 \le i_1 < \cdots < i_{k} < n} i_1^t \cdots i_k^t.
\end{equation}
\end{itemize}
\end{thm}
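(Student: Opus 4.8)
The plan is to dispose of the four assertions in turn: reduce the two combinatorial interpretations to recurrence-matching, reduce the identity $B_{\mathbf{t}n}(m)=S_t(n+m,m)$ (and its first-kind analogue) to the general machinery of Theorem \ref{spart} together with the recurrences of Theorem \ref{recmain}, and finally read off the product formulas \eqref{tmul} as a degenerate specialization of \eqref{Sk}, \eqref{sk}.

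First I would prove the combinatorial interpretations. Let $\widetilde S(n,k)$ denote the number of ordered $t$-tuples $(\pi_1,\ldots,\pi_t)$ of partitions of $[n]$ into $k$ blocks with $\min(\pi_1)=\cdots=\min(\pi_t)$, and let $\widetilde s(n,k)$ be the analogous count for permutations with $k$ cycles and common cycle-minima. It suffices to show that $\widetilde S,\widetilde s$ obey the defining recurrences and initial data of $S_t,s_t$. For $n=k$ the only admissible configuration is the all-singleton partition (resp.\ the identity permutation), so $\widetilde S(n,n)=\widetilde s(n,n)=1=S_t(n,n)=s_t(n,n)$; and $\widetilde S(0,0)=\widetilde s(0,0)=1$. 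For $n>k$ I would condition on the largest element $n$. Being maximal, $n$ is a common minimum in all $t$ components only when it forms a singleton block (resp.\ a fixed cycle) in each of them, and deleting it yields $\widetilde S(n-1,k-1)$ (resp.\ $\widetilde s(n-1,k-1)$) configurations. Otherwise $n$ is non-minimal, and it may be inserted independently into any of the $k$ blocks of each partition (resp.\ immediately after any of the $n-1$ entries of each permutation), which leaves the common minima unchanged and contributes the factor $k^t$ (resp.\ $(n-1)^t$). This produces
$$
\widetilde S(n,k)=\widetilde S(n-1,k-1)+k^t\,\widetilde S(n-1,k),\qquad
\widetilde s(n,k)=\widetilde s(n-1,k-1)+(n-1)^t\,\widetilde s(n-1,k),
$$
whence $\widetilde S=S_t$ and $\widetilde s=s_t$.

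Next I would establish $B_{\mathbf{t}n}(m)=S_t(n+m,m)$ and $\B_{\mathbf{t}n}(m)=s_t(m,m-n)$. Since the multiset of type $\mathbf{t}n$ has weight $(t,\ldots,t;2,\ldots,2;0)$ and length $n$, Theorem \ref{spart} already gives $B_{\mathbf{t}n}(m)=\stirg{n+m}{m}$ and $\B_{\mathbf{t}n}(m)=\ctirg{m}{m-n}$, so it remains to identify $\stirg{n+m}{m}$ with $S_t(n+m,m)$ and $\ctirg{m}{m-n}$ with $s_t(m,m-n)$. I would do this by induction on $n$. Applying the first case of \eqref{main} to the last entry (which equals $2$) and then the trailing-ones reduction $B_{\mathbf{k}}(m)=m^a B_{\mathbf{k}_0}(m)$ from the remark after Theorem \ref{spart} (here stripping the $t$ trailing $1$'s of the reduced tuple, so $a=t$ and $\mathbf{k}_0=\mathbf{t}(n-1)$) gives
$$
B_{\mathbf{t}n}(m)=B_{\mathbf{t}n}(m-1)+m^t\,B_{\mathbf{t}(n-1)}(m),
$$
which is exactly the recurrence for $S_t(n+m,m)$; the base case is $B_{\mathbf{t}0}(m)=B_{\emptyset}(m)=1=S_t(m,m)$. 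The first-kind identity follows in the same manner from \eqref{main'}, yielding $\B_{\mathbf{t}n}(m)=\B_{\mathbf{t}n}(m-1)+(m-1)^t\B_{\mathbf{t}(n-1)}(m-1)$, which is the recurrence for $s_t(m,m-n)$.

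Finally, the product formulas \eqref{tmul} drop out of \eqref{Sk} and \eqref{sk} upon setting $a_i=t$, $t_i=2$ for all $i$ and $\ell=n$: every binomial collapses, since $\binom{t_i+i_{j+1}-i_j-2}{i_{j+1}-i_j}=\binom{i_{j+1}-i_j}{i_{j+1}-i_j}=1$ in \eqref{Sk} and $\binom{i_{j+1}-i_j-1}{t_i-2}=\binom{i_{j+1}-i_j-1}{0}=1$ in \eqref{sk}, leaving precisely the sums $\sum i_1^t\cdots i_n^t$ over the stated weakly, resp.\ strictly, increasing ranges. I expect the main obstacle to lie in the permutation case of the first two bullets: one must verify carefully that inserting the maximal element after any of the $n-1$ entries never alters the set of cycle minima and that the $t$ insertions are genuinely independent across the $t$ copies, so that the factor is exactly $(n-1)^t$; once this is pinned down, the partition case and the two analytic reductions are routine.
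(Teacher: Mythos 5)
Your proposal is correct and takes essentially the same route as the paper: both prove the first two bullets by conditioning on the largest element $n$ (singleton block/fixed cycle in all $t$ components, versus insertion into one of $k$ blocks or after one of $n-1$ entries in each component, giving the factors $k^t$ and $(n-1)^t$) and matching the defining recurrences, both obtain the third bullet by deriving $B_{\mathbf{t}n}(m)=B_{\mathbf{t}n}(m-1)+m^t B_{\mathbf{t}(n-1)}(m)$ and its first-kind analogue from \eqref{main}, \eqref{main'} with matching initial values, and both read off \eqref{tmul} by specializing \eqref{Sk}, \eqref{sk} to $a_i=t$, $t_i=2$, $\ell=n$. As a bonus, your first-kind recurrence $\B_{\mathbf{t}n}(m)=\B_{\mathbf{t}n}(m-1)+(m-1)^t\,\B_{\mathbf{t}(n-1)}(m-1)$ is the correct form -- the paper's proof misprints the final argument as $m$ instead of $m-1$ -- so your write-up actually repairs a small typo.
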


\begin{proof} Let $S'_t(n,k)$ (resp. $s'_t(n,k)$) be the number of ordered $t$-tuples of partitions (resp. permutations) of $[n]$ into $k$ blocks (resp. cycles) having the same set of blocks (resp. cycles) minima.

If we have a separate block $\{ n\}$ (resp. cycle $(n)$), then $n$ is a minimal element and this block should appear in every partition $\pi_1, \ldots, \pi_t$ (resp. permutation $\sigma_1, \ldots, \sigma_t$), and the number of ways to form these tuples is $S'_t(n - 1, k - 1)$ (resp. $s'_t(n - 1, k - 1)$). Otherwise, if $n$ belongs to blocks (resp. cycles) with the other elements, then for any partition $\pi_i$ $(1 \le i \le t)$ (resp. permutation $\sigma_i$), there are $k$ (resp. $n-1$) ways to put $n$ in $k$ blocks (resp. cycles) of $\pi_i$ (resp. $\sigma_i$). So, there are totally $k^t S'_t(n-1, k)$ (resp. $(n-1)^t s'_t(n-1, k)$) ways to form $\pi_1, \ldots, \pi_t$ (resp. $\sigma_1, \ldots, \sigma_t$). $S'_t(n,k)$ (resp. $s'_t(n,k)$) satisfies the same recurrence as $S_t(n,k)$ (resp. $s_t(n,k)$) and $S'_t(1,1) = S_t(1,1), s'_t(1,1) = s_t(1,1)$. Therefore, $$S_t(n,k) = S'_t(n,k), \quad s_t(n,k) = s'_t(n,k).$$

If $n = 0$, then $B_{\emptyset}(m) =\B_{\emptyset}(m) = S_t(m, m) = s_t(m,m)= 1.$ By the recurrence relations \eqref{main}, \eqref{main'}, $$B_{\mathbf{t}n}(m) = B_{\mathbf{t}n}(m - 1) + m^t B_{\mathbf{t}(n-1)}(m), \quad \B_{\mathbf{t}n}(m) = \B_{\mathbf{t}n}(m - 1) + (m-1)^t \B_{\mathbf{t}(n-1)}(m).$$ It is easy to see that $S_{t}(n+m,m), s_{t}(m,m - n)$ satisfy the same recurrence relations and the corresponding initial values coincide. 

The formulas \eqref{tmul} imply from Theorem \ref{spart} and \eqref{Sk}, \eqref{sk}.
\end{proof}

Note that the described combinatorial meanings of $S_t(n,k), s_t(n,k)$ can be refined from Corollary \ref{cat} and in case of $t = 2$ they are similar to Dumont's interpretations of the central factorial numbers (see \cite{dumont} and \cite{foata, zeng}). Other combinatorial interpretations of the generalized central factorial numbers $S_t(n,k)$ have previously been considered in \cite{domaratzki} and in a more general version in \cite{han}.

\end{document}